\newtheorem{theorem}{Theorem}[section]
\newtheorem{lemma}[theorem]{Lemma}
\newtheorem{proposition}[theorem]{Proposition}
\newtheorem{corollary}{Corollary}
\newtheorem{rem}{Remark}
\theoremstyle{definition}
\renewcommand{\leq}{\leqslant}
\renewcommand{\geq}{\geqslant}
\def\R{\mathbb{R}}
\def\Z{\mathbb{Z}}
\def\N{\mathbb{N}}
\def\overast{\overline{\ast}}
\numberwithin{equation}{section}
\begin{document}

\title[Sumset inequalities on cubes]{
Discrete Brunn--Minkowski  inequality for subsets of the cube
}




\author[Becker]{Lars Becker}
\author[Ivanisvili]{Paata Ivanisvili}
\author[Krachun]{Dmitry Krachun}
\author[Madrid]{Jos\'e Madrid}

\address[LB]{Mathematical Institute, University of Bonn, Endenicher Allee 60, 53115, Bonn, Germany
}
\email{becker@math.uni-bonn.de}

\address[PI]{Department of Mathematics, University of California, Irvine,  CA 92617, USA
}
\email{pivanisv@uci.edu}

\address[DK]{Mathematics Department, Princeton University, Princeton, NJ 08544, USA
}
\email{dk9781@princeton.edu}

\address[JM]{Department of  Mathematics, Virginia Polytechnic and State University,  225 Stanger Street, Blacksburg, VA 24061-1026, USA
}
\email{josemadrid@vt.edu}

\subjclass[2020]{11B30, 11B13, }

\maketitle
\begin{abstract}
We show that for all $A, B \subseteq \{0,1,2\}^{d}$ we have $$
|A+B|\geq (|A||B|)^{\log(5)/(2\log(3))}.
$$
 We also show that for  all finite $A,B \subset \mathbb{Z}^{d}$, and any $V \subseteq\{0,1\}^{d}$ the inequality 
$$
|A+B+V|\geq |A|^{1/p}|B|^{1/q}|V|^{\log_{2}(p^{1/p}q^{1/q})}
$$
holds for all $p \in (1, \infty)$, where  $q=\frac{p}{p-1}$ is the conjugate exponent of $p$.  All the estimates are dimension free with the best possible exponents. We discuss applications to various related problems.


\end{abstract}

\section{Introduction}
\subsection{Minkowski sum for subsets of a discrete cube}
It is well-known that given arbitrary compact sets $A_{1}, \ldots, A_{n} \subset \mathbb{R}^{d}$ the classical Brunn--Minkowski inequality states that 
\begin{align}\label{brunnm}
\mu(A_{1}+\ldots+A_{n})\geq \left(\sum_{j=1}^{n} \mu(A_{j})^{1/d}\right)^{d}, 
\end{align}
where $d\mu$ is the Lebesgue measure on the product space  $\mathbb{R}^{d}$, and $\sum_{j=1}^{n}A_{j}$ denotes the Minkowski sum.  The Gaussian counterpart of the inequality (\ref{brunnm}), i.e., the so called Ehrhard's inequality states that
\begin{align}\label{ehrh}
\gamma(A_{1}+\ldots+A_{n})\geq \Phi\left(\sum_{j=1}^{n}\Phi^{-1}(\gamma(A_{j}))\right),
\end{align}
where $\gamma$ denotes the standard Gaussian measure on $\mathbb{R}^{d}$  with density $\frac{e^{-\|x\|^{2}/2}}{(2\pi)^{d/2}}dx$, $\|x\|^{2} = x_{1}^{1}+\ldots+x_{d}^{2}$ for $x=(x_{1}, \ldots, x_{d})\in \mathbb{R}^{d}$,  and $\Phi(t) = \int_{-\infty}^{t}\frac{e^{-s^{2}/2}}{\sqrt{2\pi}}ds$. Both estimates (\ref{brunnm}) and (\ref{ehrh}) are sharp (see \cite{IV} and references therein). 

However, less is known when $A_{j}$ are subsets of a discrete cube and the underlying measure is the uniform counting measure. One of the goals of this paper is to find the largest possible power $p_{n,m}$ such that the inequality 
\begin{align}\label{bur}
\left|\sum_{j=1}^{n}A_{j}\right|\geq \prod_{j=1}^{n}|A_{j}|^{p_{n,m}}
\end{align}
holds for all subsets $A_{1}, \ldots, A_{n} \subset \mathcal{C}_{m}^{d}:= \{0,1,  \ldots, m\}^{d}\subset \mathbb{Z}^{d}$, all $d\geq 1$,  where $n, m$ are fixed nonnegative integers, and $|\cdot |$ denotes the uniform counting measure of a discrete set.

Let us make a couple of observations. 

1. For arbitrary sets one has the following trivial bounds
\begin{equation}\label{trivial bounds}
\prod_{i=1}^{n}|A_i|\geq\left|\sum_{j=1}^{n}A_{j}\right|\geq \max_{1\leq i\leq n}\{|A_i|\}\geq \prod_{i=1}^{n}|A_i|^{1/n},
\end{equation}
which imply that $1\geq p_{n,m}\geq \frac{1}{n}.$

2. Choosing $A_{1}=\ldots=A_{n} = \mathcal{C}_{m}^{d}$ we obtain
\begin{align}\label{sharp01}
\frac{\log(nm+1)}{n\log(m+1)}\geq p_{n,m}\,.
\end{align} 

3. The main result in \cite{W} and independently in \cite{HS1} states that 
$p_{2,1}=\frac{\log(3)}{\log(4)}$, i.e., 
we have equality in (\ref{sharp01}) in the case $(n,m)=(2,1)$. The equality  $p_{n,1}=\frac{\log(n+1)}{n\log(2)}$ was conjectured in \cite{HS1}, and established\footnote{We are grateful to Ben Green for pointing to us Appendix B in \cite{BG2} which contains more comprehensive list of references on the value $p_{n,1}$.} in \cite{BRO} and independently in \cite{LLS1}. The proof in \cite{LLS1} uses conformal maps and Hadamard's three circle theorem, whereas the proof in \cite{BRO} is more direct and uses a clever inductive argument. We will present a new proof (Theorem~\ref{mth01}) of the equality $p_{n,1}=\frac{\log(n+1)}{n\log(2)}$ based on Descartes' rule of signs.

4. For $m\geq 3$ let $\tau_{m}$ be the solution of the equation 
\begin{align*}
1+(m(m+1))^{\tau_{m}}=(m+1)^{2\tau_{m}}.
\end{align*}
The lower bound $p_{2,m}\geq \tau_{m}$ was obtained in \cite{BDFKK}, and it was also asked whether we have equality in (\ref{sharp01}) for $n=2$. The behaviour of the power $\tau_{m}$, as $m$ tends to $\infty$, asymptotically matches the behaviour of the power $\frac{\log(2m+1)}{2\log(m+1)}$.

For any $p \geq 1$ and any $f :\mathbb{Z}^{d} \to \mathbb{R}$ with finite support set 
\begin{align*}
    \| f\|_{\ell^{p}(\mathbb{Z}^{d})} = \left(\sum_{x \in \mathbb{Z}^{d}} |f(x)|^{p} \right)^{1/p}.
\end{align*}
In what follows we will write $\| f\|_{p}$ instead of $\| f\|_{\ell^{p}(\mathbb{Z}^{d})}$ and it will be clear from the context what is the value for $d$. Given functions $f, g :\mathbb{Z}^{d} \to \mathbb{R}$ with finite support we define their sup convolution as follows 
\begin{align*}
    f\bar{*}g(z) = \sup_{x \in \mathbb{Z}^{d}}f(x)g(z-x). 
\end{align*}

In the next theorem we affirmatively answer the question asked in \cite{Kath} which also confirms the case  $m=2$ of the question in \cite{BDFKK}.  
\begin{theorem}\label{mth02}
For all $d\geq 1$, and all $f,g : \mathbb{Z}^{d} \to [0, \infty)$ supported on $\{0,1,2\}^{d}$, we have 
\begin{align}\label{functionf2}
    \| f \bar{*} g\|_{1} \geq  \|f\|_{q} \|g\|_{q}, \quad \text{where} \quad q=\frac{2\log(3)}{\log(5)}.
\end{align}
In particular, for all $A, B \subset \{0,1,2\}^{d}$, and all $d\geq 1$ we have 
\begin{align}\label{set01}
|A+B|\geq |A|^{p}|B|^{p}
\end{align}
holds with the best possible power $p=\frac{\log(5)}{2\log(3)}=0.732\ldots$. 
\end{theorem}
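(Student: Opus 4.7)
My plan is to derive the set inequality (\ref{set01}) from the functional inequality (\ref{functionf2}) and to prove (\ref{functionf2}) by induction on $d$, with the one-dimensional case as the base case. For the set implication, I specialize (\ref{functionf2}) to $f=\mathbf{1}_A$ and $g=\mathbf{1}_B$; then $f\bar{*}g=\mathbf{1}_{A+B}$ and $\|\mathbf{1}_A\|_q=|A|^{1/q}$, so $\|f\bar{*}g\|_1\geq\|f\|_q\|g\|_q$ becomes $|A+B|\geq(|A||B|)^{1/q}=(|A||B|)^{\log 5/(2\log 3)}$, which is (\ref{set01}). The exponent is best possible: for $A=B=\{0,1,2\}^d$ one has $|A+B|=5^d$ and $(|A||B|)^{\log 5/(2\log 3)}=9^{d\log 5/(2\log 3)}=5^d$, since $9^{\log 5/(2\log 3)}=5$ by the defining relation of the exponent.

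For the inductive step $d-1\to d$ (with $d\geq 2$), I split coordinates as $x=(x',x_d)$, $x'\in\Z^{d-1}$, and introduce the slices $F_i(x'):=f(x',i)$ and $G_j(x'):=g(x',j)$ for $i,j\in\{0,1,2\}$, each supported on $\{0,1,2\}^{d-1}$. The definition of sup-convolution gives the pointwise identity
\[
(f\bar{*}g)(z',z_d)=\sup_{i+j=z_d}(F_i\bar{*}G_j)(z'),
\]
so summing in $z'$ (using $\sum\sup\geq\sup\sum$) and invoking the induction hypothesis yields
\[
\sum_{z'}(f\bar{*}g)(z',z_d)\geq\sup_{i+j=z_d}\|F_i\bar{*}G_j\|_1\geq\sup_{i+j=z_d}\|F_i\|_q\|G_j\|_q=(a\bar{*}b)(z_d),
\]
where $a(i):=\|F_i\|_q$ and $b(j):=\|G_j\|_q$ are functions on $\{0,1,2\}$. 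Summing in $z_d$ and applying the one-dimensional case of (\ref{functionf2}) to $a,b$ gives $\|f\bar{*}g\|_1\geq\|a\bar{*}b\|_1\geq\|a\|_q\|b\|_q$; combined with $\|a\|_q^q=\sum_i\|F_i\|_q^q=\|f\|_q^q$ and $\|b\|_q=\|g\|_q$, this closes the induction.

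It remains to prove the one-dimensional base case: for all $f,g\in[0,\infty)^3$ and $q=2\log 3/\log 5$,
\[
\Phi(f,g):=f_0g_0+\max(f_0g_1,f_1g_0)+\max(f_0g_2,f_1g_1,f_2g_0)+\max(f_1g_2,f_2g_1)+f_2g_2\geq\|f\|_q\|g\|_q.
\]
By positive one-homogeneity in each variable I normalize $\|f\|_q=\|g\|_q=1$, reducing the problem to showing $\Phi\geq 1$ on a compact constraint surface. The candidate extremizer is the uniform distribution $f=g=3^{-1/q}(1,1,1)$: all nine products $f_ig_j$ equal $3^{-2/q}$ and $\Phi=5\cdot 3^{-2/q}=1$ by the defining relation $3^{2/q}=5$. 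Since $\Phi$ is the maximum of twelve bilinear forms $B_\sigma(f,g)$ indexed by the possible choices of representative $(i_k,k-i_k)$ in each of the four nontrivial maxima, my plan is to show that for every admissible $(f,g)$ some $\sigma$ (depending on $(f,g)$) satisfies $B_\sigma(f,g)\geq 1$.

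The main obstacle is that no single $B_\sigma$ and no fixed convex combination of the $B_\sigma$'s dominates $\|f\|_q\|g\|_q$ globally: testing on $f=e_i$, $g=e_j$ with $i+j=k$ forces any universal choice to concentrate the slice-$k$ mass at $(i,j)$, which is incompatible across different $(i,j)$. Therefore the winning $\sigma$ must depend genuinely on $(f,g)$. I plan to handle this by a case analysis on which term attains each of the four maxima, drastically reduced using the symmetries $(f,g)\leftrightarrow(g,f)$ and $i\mapsto 2-i$; within each case, the bound $B_\sigma(f,g)\geq 1$ on the constraint sphere becomes an explicit polynomial inequality whose minimum can be located via Lagrange multipliers and evaluated in closed form using $3^{2/q}=5$. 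The delicate point will be the behavior in a neighborhood of the tight extremizer $f=g=3^{-1/q}(1,1,1)$, where several cases meet and the inequality is saturated.
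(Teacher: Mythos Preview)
Your reduction from general $d$ to $d=1$ is correct and is exactly the ``compressing dimension'' argument the paper uses (Theorem~\ref{mth001}); likewise the deduction of (\ref{set01}) from (\ref{functionf2}) and the sharpness check at $A=B=\{0,1,2\}^d$ are fine. The entire content of the theorem therefore sits in the one-dimensional six-point inequality, and that is where your proposal has a genuine gap.

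Your plan for $d=1$ is to case-split on which term realises each of the three interior maxima and then, in each region, minimise the resulting bilinear form $B_\sigma$ on the constraint $\|f\|_q=\|g\|_q=1$ via Lagrange multipliers. Two problems. First, the description ``becomes an explicit polynomial inequality'' is not accurate: the constraint involves $q$-th powers with $q=2\log 3/\log 5$ irrational, so the KKT system is transcendental, not algebraic, and you have not indicated how you would solve or bound it. Second, within a case you must also enforce the inequality constraints that define the region (e.g.\ $f_0g_1\ge f_1g_0$); these add complementary-slackness conditions, and near the extremizer $f=g=3^{-1/q}(1,1,1)$ \emph{all} of them are active, which is precisely the regime you flag as ``delicate'' but do not resolve. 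As it stands the proposal is an outline, not a proof, and you have not shown the computation terminates in something checkable.

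For comparison, the paper's $d=1$ argument (Proposition~\ref{prop: prek lein m2}) avoids a raw Lagrange computation. After using a rearrangement step to assume the sequences are nonincreasing, it runs a concavity/perturbation argument (Lemma~\ref{lem: loc conc}) showing that at any minimiser each variable is either $0$, $1$, or tied by an equality $x_iy_{k-i}=x_{i'}y_{k-i'}$. Enumerating the very few consistent systems of such ties collapses the problem to $x=y$ or to one exceptional family, and in each case the inequality reduces to an explicit one- or two-variable inequality (Lemmas~\ref{lem: 1var} and~\ref{lem: 2var}) that is then dispatched by Descartes' rule of signs for pseudo-polynomials. If you want to push through your own approach, you will need an analogous structural step that pins down the minimiser before you can evaluate anything in closed form.
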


The inequality (\ref{set01}) follows from  (\ref{functionf2}) by taking indicator functions of sets i.e., $f = 1_{A}$ and $g=1_{B}$.

The following result was obtained by  Matolcsi, Ruzsa, Shakan and Zhelezov.
\begin{theorem}[\cite{MRSZ}]\label{mrsz1}
Let $d\geq 1$ and $A, B\subset \Z^d$ be finite sets and let $V\subseteq \{0,1\}^d$. Then 
\begin{align}\label{diskb}
|A+B+V|\geq \left(\frac{p^{1/p}q^{1/q}}{2}\right)^{d}|V||A|^{1/p}|B|^{1/q},
\end{align}
for any $p \in (1,\infty),$ where $q=\frac{p}{p-1}$ is the conjugate exponent of $p$. 
\end{theorem}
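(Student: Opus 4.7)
I will prove a functional strengthening: for all nonnegative finitely supported $f, g : \mathbb{Z}^d \to [0, \infty)$ and $h$ supported on $\{0, 1\}^d$,
\[
\|f \,\overast\, g \,\overast\, h\|_1 \;\geq\; K^d \,\|h\|_1 \,\|f\|_p \,\|g\|_q, \qquad K := \tfrac{p^{1/p}q^{1/q}}{2},
\]
which recovers the set statement upon taking indicators. The proof proceeds by induction on $d$; the whole difficulty sits in the base case $d = 1$, while the inductive step is formal.

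First I would settle the one-dimensional case with $\eta : \{0, 1\} \to [0, \infty)$ and $\phi, \psi : \mathbb{Z} \to [0, \infty)$. When $\eta$ is supported at a single point, the claim reduces to $\|\phi \,\overast\, \psi\|_1 \geq \|\phi\|_p \|\psi\|_q$, which follows from interpolating the two trivial bounds $\|\phi \,\overast\, \psi\|_1 \geq \|\phi\|_\infty \|\psi\|_1$ and $\|\phi \,\overast\, \psi\|_1 \geq \|\phi\|_1 \|\psi\|_\infty$ by weighted H\"older, together with $K \leq 1$. For $\eta = 1_{\{0, 1\}}$, set $F = \phi \,\overast\, \psi$ and $L_r = \{F > r\}$; the layer-cake formula gives $\|\eta \,\overast\, F\|_1 = \int_0^\infty |L_r + \{0, 1\}|\, dr$. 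Since $L_r \supseteq \{\phi > s\} + \{\psi > r/s\}$ for any $s > 0$, and since the elementary one-dimensional bound $|A + B + \{0, 1\}| \geq |A| + |B|$ holds in $\mathbb{Z}$ (from $|A + B| \geq |A| + |B| - 1$ plus the extra element gained by the shift), one has $|L_r + \{0, 1\}| \geq |\{\phi > s\}| + |\{\psi > r/s\}|$. Taking $s(r) = \tilde C\, r^{1/p}$ and substituting via the usual layer-cake formulas for $L^p$ and $L^q$ norms, the integral becomes $\|\phi\|_p^p/\tilde C^p + \tilde C^q \|\psi\|_q^q$; minimizing this expression over $\tilde C > 0$ yields exactly $p^{1/p} q^{1/q} \|\phi\|_p \|\psi\|_q = K \|\eta\|_1 \|\phi\|_p \|\psi\|_q$, as required.

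The inductive step from $d-1$ to $d$ is then purely formal. Splitting each variable into its first $d - 1$ coordinates and its last coordinate, define slice functions $f_{a''}(a') := f(a', a'')$, and similarly for $g$ and $h$. Sup-convolution factors as $(f \,\overast\, g \,\overast\, h)(z', z'') = \sup_{a'' + b'' + v'' = z''} (f_{a''} \,\overast\, g_{b''} \,\overast\, h_{v''})(z')$. Summing over $z'$ and using $\sum_{z'} \sup \geq \sup \sum_{z'}$, then summing over $z''$, gives
\[
\|f \,\overast\, g \,\overast\, h\|_1 \;\geq\; \sum_{z''} \sup_{a'' + b'' + v'' = z''} \|f_{a''} \,\overast\, g_{b''} \,\overast\, h_{v''}\|_1.
\]
Applying the $(d-1)$-dimensional inductive hypothesis inside the supremum extracts a factor $K^{d-1} \|h_{v''}\|_1 \|f_{a''}\|_p \|g_{b''}\|_q$; the outer sum then equals $K^{d-1} \|\eta \,\overast\, \phi \,\overast\, \psi\|_1$ for the profile functions $\phi(a'') = \|f_{a''}\|_p$, $\psi(b'') = \|g_{b''}\|_q$, and $\eta(v'') = \|h_{v''}\|_1$. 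A final application of the one-dimensional inequality supplies the missing factor of $K$, and by Fubini $\|\phi\|_p = \|f\|_p$, $\|\psi\|_q = \|g\|_q$, $\|\eta\|_1 = \|h\|_1$, closing the induction.

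The hard part will be the one-dimensional inequality when $\eta$ has two strictly positive but unequal weights $\eta_0 \neq \eta_1$: neither the pointwise bound $\max \geq$ average nor the comparison $\eta \geq \min(\eta_0, \eta_1) \cdot 1_{\{0, 1\}}$ is sharp enough to recover the tight constant $K (\eta_0 + \eta_1)$, as one already sees from the extremal case $\phi = \psi = \delta_0$, where the true value of $\|\eta \,\overast\, F\|_1$ is $\eta_0 + \eta_1$. I expect that the layer-cake computation above must be rerun simultaneously at both thresholds $r/\eta_0$ and $r/\eta_1$, exploiting the nesting $\{F > r/\eta_0\} \supseteq \{F > r/\eta_1\}$ (when $\eta_0 \geq \eta_1$) so that the base set inequality $|A + B + \{0, 1\}| \geq |A| + |B|$ can still be applied inside the integral to the smaller set while its complement in the larger set contributes its bulk; this is where the real technical work will be concentrated.
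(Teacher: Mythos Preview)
Your tensorization step is correct and is exactly the paper's Theorem~\ref{mth001}, so everything rests on the one-dimensional functional inequality
\[
\|\eta\,\overast\,\phi\,\overast\,\psi\|_1\;\geq\;K\,(\eta_0+\eta_1)\,\|\phi\|_p\|\psi\|_q
\]
for \emph{arbitrary} weights $\eta_0,\eta_1\geq 0$. You explicitly leave this case open, but it cannot be avoided: even starting from the set statement, the slice profile $\eta(v'')=|V_{v''}|$ takes arbitrary nonnegative integer values, so the induction forces the full weighted inequality. Your suggested fix (run the layer cake at two thresholds and exploit nesting) is not a proof, and there is no indication it closes; as you yourself note, both naive comparisons already fail on $\phi=\psi=\delta_0$. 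This weighted one-dimensional inequality is precisely the substance of the theorem. The paper does not prove Theorem~\ref{mrsz1} independently but cites it from \cite{MRSZ}, and separately recovers it from the sharper Theorem~\ref{trisets}; the one-dimensional input there (Theorem~11.1 of \cite{MRSZ}, giving the constant $c_\delta=(1-\delta^p)^{1/p}(1-\delta^q)^{1/q}/(1-\delta)$ for $\eta=(1,\delta)$) is again imported as a black box.

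There is also a slip in the case $\eta=1_{\{0,1\}}$ that you do treat. The pointwise bound $|L_r+\{0,1\}|\geq|\{\phi>s\}|+|\{\psi>r/s\}|$ fails whenever exactly one level set is empty: with $\phi=\delta_0$, $\psi=M\delta_0$, $r>M$, $s<1$, the left side is $0$ and the right side is $1$. If the integrated bound held for every $\tilde C$ you would obtain $\int_0^\infty|L_r+\{0,1\}|\,dr\geq \|\phi\|_p^p/\tilde C^p+\tilde C^q\|\psi\|_q^q$ for all $\tilde C$, which is absurd since the right-hand side diverges as $\tilde C\to 0$ or $\tilde C\to\infty$. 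One must restrict the integration to the range of $r$ on which both level sets are nonempty and handle the remainder separately; this is repairable, but the argument as written is not correct.
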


Notice that by choosing $V=\{0,1\}^{d}$ and $1/p = |A|^{1/d}(|A|^{1/d}+|B|^{1/d})^{-1}$ the estimate (\ref{diskb}) becomes
\begin{align}\label{diskbr}
|A+B+\{0,1\}^{d}|^{1/d}\geq |A|^{1/d}+|B|^{1/d}.
\end{align}
The inequality (\ref{diskbr}) can be considered as the {\em discrete} Brunn--Minkowski inequality as it implies the Brunn--Moinkowski inequality (\ref{brunnm}) for two sets (see \cite{MRSZ} and references therein). In this paper we obtain 
\begin{theorem}\label{trisets}
For all $d\geq 1$, and all finitely supported $f,g,h : \mathbb{Z}^{d} \to [0, \infty)$ with $f$ supported on $\{0,1\}^{d}$ we have 
\begin{align}\label{functionf3}
    \| f\bar{*} g \bar{*}h\|_{1} \geq \|f\|_{c} \|g\|_{p} \|h\|_{q}
\end{align}
holds for all $p \in (1, \infty)$, $q=\frac{p}{p-1}$ is the conjugate exponent of $p$, and $c$ satisfies $2^{1/c}=p^{1/p}q^{1/q}$. In particular,  we have 
    \begin{align}\label{trisets1}
    |V+A+B|\geq |V|^{1/c}|A|^{1/p}|B|^{1/q},
    \end{align}
for all finite subsets $A, B \subset \mathbb{Z}^{d}$ and $V \subset \{0,1\}^{d}$.
\end{theorem}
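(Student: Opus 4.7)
The plan is to prove Theorem~\ref{trisets} by induction on $d$, taking the case $d = 1$ as the base. For the inductive step in dimension $d \geq 2$, slice along the last coordinate: for each integer $i$, let $f_i(x') := f(x', i)$ and analogously $g_i, h_i$. Since $f$ is supported on $\{0,1\}^d$, only $f_0, f_1$ are nonzero and each is supported on $\{0,1\}^{d-1}$. Unpacking the definition of $\bar{*}$ along the last coordinate, then swapping $\sum_{x'}$ with the supremum (using $\sum_{x'} \sup_i \varphi_i(x') \geq \sup_i \sum_{x'} \varphi_i(x')$) and invoking the inductive hypothesis applied to the $(d-1)$-dimensional slices $(f_v, g_a, h_b)$, we obtain
\begin{equation*}
\sum_{x'} (f \bar{*} g \bar{*} h)(x', z_d) \;\geq\; \sup_{v + a + b = z_d,\, v \in \{0,1\}} \|f_v\|_c \|g_a\|_p \|h_b\|_q \;=\; (\tilde f \bar{*} \tilde g \bar{*} \tilde h)(z_d),
\end{equation*}
where $\tilde f(v) := \|f_v\|_c$ is supported on $\{0,1\} \subset \mathbb{Z}$, $\tilde g(a) := \|g_a\|_p$, and $\tilde h(b) := \|h_b\|_q$. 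Summing over $z_d$ and applying the $d = 1$ base case to $\tilde f, \tilde g, \tilde h$, together with the identities $\|\tilde f\|_c = \|f\|_c$, $\|\tilde g\|_p = \|g\|_p$, $\|\tilde h\|_q = \|h\|_q$ (all immediate from Fubini), closes the induction.

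For the base case $d = 1$, write $f(0) = a$, $f(1) = b$ and $G := g \bar{*} h$. The desired bound becomes
\begin{equation*}
\sum_z \max(a G(z), b G(z-1)) \;\geq\; (a^c + b^c)^{1/c} \|g\|_p \|h\|_q.
\end{equation*}
Layer-cake rewriting transforms the left side into $\int_0^\infty |S_{t/a} \cup (S_{t/b}+1)|\, dt$, where $S_s := \{G > s\}$. For any $\alpha, \beta > 0$ with $\alpha\beta > s$, we have $S_s \supseteq A_\alpha + B_\beta$, where $A_\alpha := \{g > \alpha\}$, $B_\beta := \{h > \beta\}$, so the one-dimensional case of Theorem~\ref{mrsz1} (with $V = \{0,1\}$) implies $|A_\alpha + B_\beta + \{0,1\}| \geq p^{1/p} q^{1/q} |A_\alpha|^{1/p} |B_\beta|^{1/q}$. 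The proof then proceeds by selecting an optimal level-set path $(\alpha(t), \beta(t))$ and integrating; the $\ell^c$-norm $(a^c + b^c)^{1/c}$ emerges from balancing the contributions associated with the thresholds $t/a$ and $t/b$, and a H\"older-type rearrangement in the level-set variable converts the integral into the product $\|g\|_p \|h\|_q$.

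The main obstacle is carrying out the final integration in the $d = 1$ base case. A na\"ive pointwise optimization only yields weak-$L^p$ versions of the norms; to recover the strong norms $\|g\|_p \|h\|_q$ together with the sharp exponent $1/c = \log_2(p^{1/p} q^{1/q})$ of the $\ell^c$-norm of $(a,b)$, one must choose the path $(\alpha(t), \beta(t))$ and the weights so that the asymmetric contributions coming from $S_{t/a}$ and $S_{t/b}+1$ combine correctly. This requires a delicate layer-cake reparametrization matched to the $a$/$b$ asymmetry of $f$; it is the place where the sharpness of the exponent $1/c$ is encoded.
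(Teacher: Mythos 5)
Your reduction to $d=1$ by slicing along the last coordinate and applying the one-variable bound to the triples $(\|f_v\|_c, \|g_a\|_p, \|h_b\|_q)$ is correct and is essentially the paper's own ``compressing dimension'' device (Theorem~\ref{mth001} together with Remark~\ref{genrem1}). The problem is the base case $d=1$, which carries all of the content of the theorem, and which your proposal does not actually prove. You write the left-hand side as $\int_0^\infty |S_{t/a}\cup(S_{t/b}+1)|\,dt$, invoke Theorem~\ref{mrsz1} for $|A_\alpha+B_\beta+\{0,1\}|$, and then assert that the proof ``proceeds by selecting an optimal level-set path $(\alpha(t),\beta(t))$'' and a ``delicate layer-cake reparametrization'' that you never exhibit; you even close by calling this ``the main obstacle.'' That is an honest description of a gap, not a proof: the entire difficulty of the theorem is how the $\ell^c$-norm $(a^c+b^c)^{1/c}$ emerges, and no concrete choice of path producing it is offered. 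It is also unclear that the union $S_{t/a}\cup(S_{t/b}+1)$ can be lower-bounded by a single sumset with $\{0,1\}$ for general $G=g\,\bar{*}\,h$: if $a\geq b$ one has $S_{t/a}\supseteq S_{t/b}$ and hence $|S_{t/a}\cup(S_{t/b}+1)|\geq |S_{t/b}+\{0,1\}|$, but this discards the $a$-contribution entirely and cannot give the correct $\ell^c$-dependence on $(a,b)$.

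The paper's route through the base case is different and decidedly non-elementary. After normalizing $f=1_{\{0\}}+\delta\, 1_{\{1\}}$ with $\delta\in[0,1]$, it cites the weighted Pr\'ekopa--Leindler inequality of Matolcsi--Ruzsa--Shakan--Zhelezov (Theorem 11.1 of \cite{MRSZ}), which gives
\begin{equation*}
\frac{\|f\,\bar{*}\,g\,\bar{*}\,h\|_1}{\|g\|_p\|h\|_q}\;\geq\; c_\delta:=\frac{(1-\delta^p)^{1/p}(1-\delta^q)^{1/q}}{1-\delta},
\end{equation*}
and then reduces the theorem to the scalar inequality $c_\delta\geq (1+\delta^c)^{1/c}$ on $[0,1]$. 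That inequality is the genuine crux of the argument, and the paper proves it by a pseudo-polynomial computation and Descartes' rule of signs. Your sketch contains neither this weighted input nor any substitute for it, so the proof is incomplete at exactly the point where the theorem is hard.
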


The theorem positively answers the question asked in \cite{Shakan1} (see the discussion right after Theorem 11.1 in \cite{MRSZ}, and see \cite{Shakan1}).  Notice that Theorem~\ref{trisets} implies Theorem~\ref{mrsz1} because of the trivial estimate 
\begin{align*}
|V|^{1/c} \geq \left(\frac{p^{1/p}q^{1/q}}{2}\right)^{d}|V|
\end{align*}
due to  $c\geq 1$ and  $|V|^{1-1/c} \leq |\{0,1\}^{d}|^{1-1/c}=\left(\frac{2}{p^{1/p}q^{1/q}}\right)^{d}$.

Theorem~\ref{mrsz1}, in the case $p=q=2$, is the key ingredient in obtaining query complexity for learning subsets $A \subset \mathbb{Z}^{d}$ having small {\em doubling constant}, see \cite{DZDP}. A short self-contained proof of Theorem~\ref{mrsz1} was recently given in \cite{GMRSZ} in the case $p=q=2$.

The next theorem due to Landau, Logan, Shepp \cite{LLS1} and independently Brown, Keane, Moran and Pierce \cite{BRO} states that we have equality in (\ref{sharp01}) for the pairs $(n,m)=(n,1)$ for all $n\geq 1$. We will present a new proof of it based on Decartes' rule of signs. 

\begin{theorem}[\cite{LLS1}, \cite{BRO}]\label{mth01}
For all $d\geq 1$, and all $f_{j} : \mathbb{Z}^{d} \to [0, \infty)$, $j=1, \ldots, n$, supported on $\{0,1\}^{d}$, we have 
\begin{align}\label{functionf}
    \| f_{1}\bar{*}\cdots \bar{*}f_{n}\|_{1} \geq \prod_{j=1}^{n} \|f_{j}\|_{q}, \quad \text{where} \quad q=\frac{n}{\log_{2}(n+1)}.
\end{align}
In particular, for all subsets $A_{j} \subseteq \{0,1\}^{d}$, we have 
\begin{align}\label{hyperc}
\left| \sum_{j=1}^{n}A_{j}\right| \geq \left(\prod_{j=1}^{n}|A_{j}|\right)^{p}
\end{align}
with the best possible power  $p=\frac{\log_{2}(n+1)}{n}$. 
\end{theorem}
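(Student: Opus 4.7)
The plan is to deduce Theorem~\ref{mth01} from its $d=1$ case by induction on $d$, and then prove the 1D case using Descartes' rule (in its Laguerre extension to generalized exponential polynomials with real exponents, as hinted in the paper).

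\textbf{Dimension reduction.} Writing $\{0,1\}^d = \{0,1\}^{d-1}\times\{0,1\}$ and splitting each $f_j$ into slices $f_j^0, f_j^1 \colon \{0,1\}^{d-1} \to [0,\infty)$ indexed by the last coordinate, I introduce the one-dimensional profile $\tilde f_j \colon \{0,1\} \to [0,\infty)$ defined by $\tilde f_j(y) := \|f_j^y\|_q$. The sup-convolution decomposes as
\begin{equation*}
(f_1 \bar{*} \cdots \bar{*} f_n)(z', w) = \max_{|S|=w} \bigl(f_1^{\mathbf{1}_S(1)} \bar{*} \cdots \bar{*} f_n^{\mathbf{1}_S(n)}\bigr)(z'),
\end{equation*}
so, summing over $z'$ via the elementary bound $\sum_{z'}\max_S g_S(z') \geq \max_S \|g_S\|_1$, invoking the inductive hypothesis in dimension $d-1$ on each family $\{f_j^{\mathbf{1}_S(j)}\}_j$, and using the Pythagorean identity $\|\tilde f_j\|_q^q = \|f_j^0\|_q^q + \|f_j^1\|_q^q = \|f_j\|_q^q$, one obtains
\begin{equation*}
\|f_1 \bar{*} \cdots \bar{*} f_n\|_1 \;\geq\; \|\tilde f_1 \bar{*} \cdots \bar{*} \tilde f_n\|_1 \;\geq\; \prod_j \|\tilde f_j\|_q \;=\; \prod_j \|f_j\|_q,
\end{equation*}
the middle inequality being the 1D case applied to the profiles $\tilde f_1, \ldots, \tilde f_n$.

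\textbf{One-dimensional case.} After normalizing $f_j(0) = 1$ and sorting $b_j := f_j(1)$ so that $b_1 \geq \cdots \geq b_n \geq 0$, the inequality reads
\begin{equation*}
\sum_{k=0}^n b_1 b_2 \cdots b_k \;\geq\; \prod_{j=1}^n (1+b_j^q)^{1/q}, \qquad q = \tfrac{n}{\log_2(n+1)}.
\end{equation*}
The extremal configuration is $b_1=\cdots=b_n=1$, where both sides equal $n+1$ thanks to the defining identity $2^{n/q} = n+1$. I would first attempt to reduce to the symmetric diagonal $b_1=\cdots=b_n=b$ via a coalescing rearrangement, and then study the single-variable function $\Phi(b) := \log(1+b+\cdots+b^n) - (n/q)\log(1+b^q)$, which vanishes at $b=0$, $b=1$ and $b\to\infty$ and moreover satisfies $\Phi'(1) = 0$ by a direct calculation. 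Clearing denominators in the critical equation $\Phi'(b) = 0$ produces the finite generalized polynomial
\begin{equation*}
E(b) \;:=\; \sum_{k=0}^{n-1}(k+1)\,b^k \;-\; \sum_{k=0}^{n-1}(n-k)\,b^{q-1+k},
\end{equation*}
whose positive real roots (counted with multiplicity) are, by Laguerre's extension of Descartes' rule, bounded above by the number of sign changes in its sorted coefficient sequence. Coupled with the known boundary values and the double zero of $\Phi$ at $b=1$, this count pins down the shape of $\Phi$ and forces $\Phi \geq 0$ on $[0,\infty)$.

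\textbf{Main obstacles.} The hardest step is not the dimension reduction but the passage from the non-symmetric $n$-variable inequality to the diagonal $b_1=\cdots=b_n$: both sides of the 1D inequality decrease under the coalescing move $(b_i,b_{i+1})\mapsto(\sqrt{b_ib_{i+1}},\sqrt{b_ib_{i+1}})$, and monotonicity of the ratio LHS$/$RHS is not automatic since neither side exhibits an obvious Schur-convexity at this exponent --- the reduction will likely have to exploit the precise value $q = n/\log_2(n+1)$. A secondary technical point is that $q$ is irrational in general, so the interleaving of the integer exponents $\{0,\ldots,n-1\}$ with the shifted exponents $\{q-1,\ldots,q+n-2\}$ in $E$ depends on $n$, and one must execute the Laguerre sign-change count in a way that is uniform in $n$.
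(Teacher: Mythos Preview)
Your dimension reduction is correct and is exactly the paper's tensorisation argument (Theorem~\ref{mth001}). The gaps are both in the one-dimensional step.

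\textbf{The reduction to the diagonal does not go through, and the paper does not attempt it.} You already note that under the coalescing move $(b_i,b_{i+1})\mapsto(\sqrt{b_ib_{i+1}},\sqrt{b_ib_{i+1}})$ both sides decrease, so the ratio is not obviously monotone; this obstacle is real and is not resolved by the specific value of $q$. The paper avoids it by inserting an intermediate product: with $b_1\geq\cdots\geq b_n$ it proves
\[
1+b_1+b_1b_2+\cdots+\prod_{i}b_i \;\geq\; \prod_{i=1}^n\bigl(1+b_i+\cdots+b_i^{\,n}\bigr)^{1/n}\;\geq\;\prod_{i=1}^n(1+b_i^{\,q})^{1/q}.
\]
The first inequality (Lemma~\ref{Thm 1 key lemma 1}) decouples the variables without equating them; its proof is an induction on $n$ that exploits the involution $(b_1,\ldots,b_n)\mapsto(1/b_n,\ldots,1/b_1)$ together with the fact that $w\mapsto (1+\cdots+w^{n-1})^{1/(n-1)}/(1+\cdots+w^n)^{1/n}$ is symmetric under $w\mapsto 1/w$ and unimodal (Lemma~\ref{lem:increasing-ratio}). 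The second inequality is your single-variable statement $\Phi\geq 0$, applied factor by factor. This two-step factorisation through $\prod_i P_n(b_i)^{1/n}$ is the missing idea.

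\textbf{Your Descartes count on $E$ is too weak for $n\geq 4$.} The positive coefficients of $E$ sit at the integer exponents $0,\ldots,n-1$ and the negative ones at $q-1,\ldots,q+n-2$; since both progressions have step $1$ they interleave, producing $\Theta(n)$ sign changes (exactly $2n-1$ when $1<q<2$, i.e.\ for $2\leq n\leq 5$). On the other hand Rolle, the symmetry $\Phi(1/b)=\Phi(b)$, and the double zero at $b=1$ force only a bounded number of zeros of $\Phi'$ --- at most seven, even after using $\Phi''(1)>0$ --- so no contradiction arises once $n\geq 4$. The paper circumvents this by proving instead the telescoping chain $F_n\geq F_{n-1}\geq\cdots\geq F_1$ with $F_k(x)=P_k\bigl(x^{\log(k+1)/k}\bigr)^{1/\log(k+1)}$ (Lemma~\ref{lem:main}); your $\Phi\geq 0$ is the endpoint comparison $F_n\geq F_1$. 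For each consecutive pair the cleared log-derivative is a pseudo-polynomial with exactly fourteen monomials and seven sign changes, independent of $n$; after subtracting the quadruple root at $x=1$ contributed by the cleared denominators this leaves at most three roots for $\partial_x\log(F_n/F_{n-1})$, contradicting the four roots forced by Rolle and the functional equation.
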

 A related inequality to (\ref{functionf}) was established by Ivanisvili \cite[Theorem 1]{I}.

For any positive integer $k$ set
$$
kA:=\underbrace{A+A+\dots+A}_{k}. 
$$
Instead of considering subsets of cubes, one may consider sets containing cubes, that is the content of our next result.

\begin{theorem}\label{main thm 2}
Let $d\geq 1$. The following inequalities hold
   $$
 \left(k-1+\frac{1}{k}\right)^d\geq  \inf_{\{0,1,\dots,k-1\}^d\subseteq A\subset\Z^d}\frac{|kA|}{|A|}\geq k^{\frac{d(k-1)}{k}}.
   $$
\end{theorem}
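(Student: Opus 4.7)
The plan is to prove the upper bound and lower bound separately.

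For the upper bound I would take $A_{0}:=\{0,1,\dots,k-1\}^{d}$. Direct computation: $|A_{0}|=k^{d}$, $kA_{0}=\{0,\ldots,k(k-1)\}^{d}$, $|kA_{0}|=(k^{2}-k+1)^{d}$, whence $|kA_{0}|/|A_{0}|=(k-1+1/k)^{d}$. This matches the upper bound on the infimum.

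For the lower bound, set $B:=\{0,\ldots,k-1\}^{d}$. The plan has two ingredients. First, by Pl\"unnecke's inequality applied to $A$ and $B$, one has $|A+B|^{k}\geq|kB|\cdot|A|^{k-1}=(k^{2}-k+1)^{d}|A|^{k-1}$, and since $0\in B\subseteq A$ gives $A+B\subseteq kA$, this yields $|kA|/|A|\geq(k^{2}-k+1)^{d/k}|A|^{-1/k}$. Using $|A|\geq k^{d}$, this gives the baseline $|kA|/|A|\geq(k-1+1/k)^{d/k}$. Second, the discrete Brunn--Minkowski inequality (Theorem~\ref{mrsz1}) in the form $|X+Y+V|^{1/d}\geq|X|^{1/d}+|Y|^{1/d}$ can be iterated: since $B=(k-1)V$ as a sumset with $V=\{0,1\}^{d}$, and $(k-2)V\subseteq A$, iterating $k-2$ times gives $|(k-1)A+(k-2)V|^{1/d}\geq(k-1)|A|^{1/d}$, hence $|kA|\geq(k-1)^{d}|A|$. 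These two bounds are each sharp in complementary regimes, and I would combine them by optimizing over the exponent $p$ in Theorem~\ref{mrsz1} to try to recover the sharp exponent $k^{(k-1)/k}$ per dimension.

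The main obstacle is that neither bound alone gives $k^{d(k-1)/k}$: Pl\"unnecke alone yields $(k-1+1/k)^{d/k}$ per dimension, and iterated Brunn--Minkowski yields $(k-1)^{d}$, both falling short of $k^{d(k-1)/k}$ in complementary regimes (small $k\in\{2,3\}$ for the latter, arbitrary $d$ for the former). A clean reconciliation likely requires an inductive compression argument on $d$: first show via Frankl's coordinate compressions that one may assume $A$ is a downset, since compressions preserve $B\subseteq A$ and do not increase $|kA|$, and then slice the compressed $A$ along one coordinate, applying the inductive hypothesis to each slice $A_{t}\subseteq\Z^{d-1}$ with $A_{t}\supseteq\{0,\ldots,k-1\}^{d-1}$. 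The extra dimension-$d$ factor $k^{(k-1)/k}$ would then arise from the combined contribution of the $k^{2}-k+1$ distinct slices of $kA$ along the new axis, relative to the $k$ slices of $A$.
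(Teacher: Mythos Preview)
Your upper bound is correct and matches the paper exactly.

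The lower bound has a genuine gap. Your first ingredient contains an error: from $|kA|^k \geq |A+B|^k \geq |kB|\,|A|^{k-1}$ you get $(|kA|/|A|)^k \geq (k^2-k+1)^d/|A|$, but this bound \emph{deteriorates} as $|A|$ grows. The hypothesis $|A| \geq k^d$ is an inequality in the wrong direction and does not yield $(|kA|/|A|)^k \geq (k-1+1/k)^d$; there is no upper bound available on $|A|$. So this branch gives nothing uniform. Your second ingredient is correct and yields $|kA|/|A| \geq (k-1)^d$; since $(k-1)^k \geq k^{k-1}$ for $k\geq 4$ this does settle the lower bound for $k\geq 4$, but for $k\in\{2,3\}$ it is strictly weaker than $k^{d(k-1)/k}$. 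Your compression/slicing suggestion is only a sketch and does not supply the missing cases.

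The paper proceeds quite differently and uniformly in $k$. The two ingredients are: (i) Pl\"unnecke in the \emph{subset-extraction} form (Gyarmati--Matolcsi--Ruzsa), which from $\alpha=|kA|/|A|$ produces a nonempty $B\subseteq A$ with $|B+kA|\leq \alpha^{k/(k-1)}|B|$; and (ii) a key lemma $|A_1+\cdots+A_k+U|\geq |U|\prod_{i}|A_i|^{1/k}$ for $U=\{0,\dots,k-1\}^d$, proved by discretizing the $k$-function Pr\'ekopa--Leindler inequality. Since $U\subseteq A$ one has $B+kA\supseteq B+(k-1)A+U$, so
\[
\alpha^{k/(k-1)}|B|\;\geq\;|B+(k-1)A+U|\;\geq\;k^d\,|B|^{1/k}|A|^{(k-1)/k}\;\geq\;k^d\,|B|,
\]
using $|A|\geq|B|$ in the last step; this gives $\alpha\geq k^{d(k-1)/k}$ directly. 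The crucial idea you are missing is to apply Pl\"unnecke to manufacture the auxiliary subset $B$ (so that the unwanted $|A|$-dependence cancels), together with the Pr\'ekopa--Leindler-type lemma replacing your iterated Brunn--Minkowski step.
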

\begin{rem}
    Observe that $\lim_{k\to\infty}\frac{(k-1+\frac{1}{k})^d}{k^{\frac{d(k-1)}{k}}}=1$, thus our bounds are asymptotically optimal.
\end{rem}
\begin{rem}
    Our Theorem \ref{main thm 2} can be seen as an extension of a result obtained by Green and Tao \cite[Proposition 4.1]{GT}, for $k = 2$.
\end{rem}

For any $\lambda>0$ set
$$
A+\lambda\cdot B=\{a+\lambda b;\ a\in A, b\in B\}.
$$

The {additive energy} $E(A)$ of a finite subset $A$ of an additive group $G$ is defined as 
$
E(A):=|\{(a_1,a_2,a_3,a_4)\in A^4;\ a_1+a_2=a_3+a_4\}|,
$
(see \cite{TaoVu}). It was proved by Kane and Tao \cite[Theorem 7]{KT} that $E(A)\leq |A|^{\log_26}$ for all $A\subseteq\{0,1\}^d$. 
For any $k\geq 2$, the {$k-$additive energy} $E_k(A)$ of a finite subset $A$ of an additive group $G$ is defined as 
$
E_k(A):=|\{(a_1,a_2,\dots,a_{2k})\in A^{2k};\ a_1+a_2+\dots+a_k=a_{k+1}+a_{k+2}+\dots+a_{2k}\}|.
$
The result of Kane-Tao was later extended to all $k\geq 2$, see \cite[Theorem 2]{DGIM} and \cite[Corollary 2]{K}, establishing that $E_{k}(A)\leq |A|^{\log_{2}{{2k}\choose{k}}}$ for all $A\subseteq\{0,1\}^d$. Combining this estimate with (\ref{hyperc}) we obtain 
\begin{align*}
    E_k(A)&\leq |A|^{\log_2{{2k}\choose{k}}}=|A|^{(\log_{2}{\frac{k+1}{2}})(\log_{\frac{k+1}{2}}{{2k}\choose{k}})}\\
    &=|A|^{(\log_{2}{(k+1)}-1)(\log_{\frac{k+1}{2}}{{2k}\choose{k}})}\stackrel{(\ref{hyperc})}{\leq} 
    \left(\frac{|kA|}{|A|}\right)^{\log_{\frac{k+1}{2}}{{2k}\choose{k}}}.
\end{align*}
The upper bound on $E_{k}(A)$ in terms of $|kA|$ gives a relation between additive energies and doubling constants. 
\begin{corollary}
Let $d\geq 1$, $k\geq 2$, and $p=\log_{\frac{k+1}{2}}{{2k}\choose{k}}$. We have 
$$
E_k(A)\leq \left(\frac{|kA|}{|A|}\right)^{p}
$$
for all $A\subseteq \{0,1\}^d$. Moreover, $p$ can not be replaced by a smaller quantity. In particular,
$
E(A)\leq \left(|A+A|/|A|\right)^{\log_{3/2}6}
$
for all $A\subseteq \{0,1\}^d$. 
\end{corollary}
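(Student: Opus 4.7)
The plan is to package the computation already displayed just above the corollary as the upper bound, and then to verify optimality of the exponent by testing the full cube.

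First I would make the upper bound explicit. Combining the Kane--Tao type inequality $E_{k}(A)\leq |A|^{\log_{2}{{2k}\choose{k}}}$ from \cite{KT,DGIM,K} with Theorem~\ref{mth01} applied to $A_{1}=\cdots=A_{k}=A\subseteq\{0,1\}^{d}$, which gives $|kA|\geq |A|^{\log_{2}(k+1)}$ and hence $|A|^{\log_{2}((k+1)/2)}\leq |kA|/|A|$, I raise the latter to the power $p=\log_{(k+1)/2}{{2k}\choose{k}}$. Using the change-of-base identity $\log_{2}{{2k}\choose{k}}=\log_{2}((k+1)/2)\cdot\log_{(k+1)/2}{{2k}\choose{k}}$, this yields $|A|^{\log_{2}{{2k}\choose{k}}}\leq (|kA|/|A|)^{p}$, and chaining with Kane--Tao finishes the bound.

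Next I would exhibit sharpness by taking $A=\{0,1\}^{d}$ and sending $d\to\infty$. Here $kA=\{0,1,\ldots,k\}^{d}$, so $|kA|/|A|=((k+1)/2)^{d}$. Because the defining condition of $E_{k}$ enforces equality coordinate by coordinate in $\Z^{d}$, one has $E_{k}(\{0,1\}^{d})=E_{k}(\{0,1\})^{d}$, and the one-dimensional factor equals $\sum_{j=0}^{k}{k\choose j}^{2}={{2k}\choose{k}}$ by Vandermonde's identity. Therefore $E_{k}(A)={{2k}\choose{k}}^{d}=((k+1)/2)^{dp}=(|kA|/|A|)^{p}$ for exactly the claimed value of $p$, which is then optimal: any smaller exponent fails once $d$ is large enough. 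The stated case $k=2$ is the substitution $p=\log_{3/2}6$.

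I expect no genuine obstacle here: the upper bound is a bookkeeping exercise combining two already-cited inequalities, and the sharpness example is the canonical Hamming cube extremizer that saturates both Kane--Tao and Theorem~\ref{mth01} simultaneously, so the two saturations reinforce one another. The only care needed is to track the change of logarithm base cleanly so that the exponent $p=\log_{(k+1)/2}{{2k}\choose{k}}$ emerges in precisely the form stated.
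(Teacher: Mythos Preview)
Your proposal is correct and follows essentially the same argument as the paper: the upper bound is exactly the chain displayed just before the corollary (Kane--Tao type bound combined with \eqref{hyperc} via the change-of-base identity), and the sharpness is by testing $A=\{0,1\}^{d}$, where you supply more detail than the paper (the Vandermonde computation of $E_{k}(\{0,1\})=\binom{2k}{k}$). One minor remark: since you establish exact equality $E_{k}(\{0,1\}^{d})=(|kA|/|A|)^{p}$ for every $d\geq 1$ and $|kA|/|A|=((k+1)/2)^{d}>1$, any smaller exponent already fails at $d=1$; there is no need to send $d\to\infty$.
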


The sharpness of the power $p$ in the corollary follows by testing the inequality for $A=\{0,1\}^{d}$.

In \cite{HP} Hanson and Petridis study sum of dilates, they proved that if $A$ is a subset of a commutative group $G$ and $K$ is a parameter such that $|A+A|\leq K|A|$, then $|A+2\cdot A|\leq K^{2.95}|A|$.

As a consequence of the simple inequality $\frac{|A+2\cdot A|}{|A|}\leq |A|$ and (\ref{hyperc}) one has
\begin{corollary}\label{Corollary HP}
Let $d\geq 1$ and $p=\log_{3/2}2\approx 1.7095$. The inequality
$$
|A+2\cdot A|\leq \left(\frac{|A+A|}{|A|}\right)^{p}|A|
$$
holds for all $A\subseteq\{0,1\}^d$. Moreover, $p$ cannot be replaced by a smaller power.
\end{corollary}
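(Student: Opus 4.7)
The plan is to combine two inputs: the trivial upper bound $|A+2\cdot A|\leq|A|^{2}$ (the ``simple inequality'' mentioned just above the statement, which holds because the map $(a,b)\mapsto a+2b$ has image of size at most $|A|^{2}$), and the sharp lower bound $|A+A|\geq|A|^{\log_{2}3}$, obtained by applying Theorem~\ref{mth01} with $n=2$ and $A_{1}=A_{2}=A\subseteq\{0,1\}^{d}$. These two ingredients are tailored precisely to the exponent $p=\log_{3/2}2$, since by the definition of $p$ we have $p(\log_{2}3-1)=p\log_{2}(3/2)=1$.

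Given this, the verification is a one-line computation. From $|A+A|\geq|A|^{\log_{2}3}$ one has $|A+A|/|A|\geq|A|^{\log_{2}3-1}$, hence
\begin{equation*}
\left(\frac{|A+A|}{|A|}\right)^{p}|A|\;\geq\;|A|^{\,p(\log_{2}3-1)+1}\;=\;|A|^{2}\;\geq\;|A+2\cdot A|,
\end{equation*}
which is exactly the claimed inequality.

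For sharpness, I would test the extremal set $A=\{0,1\}^{d}$ from Theorem~\ref{mth01}: then $|A|=2^{d}$, $|A+A|=|\{0,1,2\}^{d}|=3^{d}$, and $|A+2\cdot A|=4^{d}$, because for $a,b\in\{0,1\}^{d}$ the coordinates of $a+2b\in\{0,1,2,3\}^{d}$ uniquely encode $a$ (as the residue mod $2$) and then $b$, so $(a,b)\mapsto a+2b$ is injective. The right-hand side then becomes $(3/2)^{dp}\cdot 2^{d}=2^{d}\cdot 2^{d}=4^{d}$ by the identity $(3/2)^{p}=2$, so equality is attained, and letting $d\to\infty$ shows that no smaller exponent can work. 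There is no real obstacle in the argument: the corollary is a direct combination of the trivial upper bound on $|A+2\cdot A|$ with the already-sharp Theorem~\ref{mth01} at $n=2$, and the cube $\{0,1\}^{d}$ simultaneously saturates both, which is what pins down $p=\log_{3/2}2$.
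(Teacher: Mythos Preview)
Your proposal is correct and follows exactly the same approach as the paper: combine the trivial bound $|A+2\cdot A|\le|A|^{2}$ with the $n=2$ case of (\ref{hyperc}), and check sharpness on $A=\{0,1\}^{d}$. One minor remark: since equality already holds for every $d\ge 1$, the limit $d\to\infty$ is unnecessary---testing $A=\{0,1\}$ already forces $p'\ge\log_{3/2}2$.
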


The sharpness of the power  $p=\log_{3/2}2$ follows by considering $A=\{0,1\}^d$. 

Next we study a variant of the distinct distances problem posed by Erd\H os. For positive integers $N,k,\ell,m$, we consider sets of $N$ points in $\mathbb{R}^2$ with the property that every $k$ of the points span at least $\ell$ distinct $m$-sums.
Let $\psi_{m}(N,k,\ell)$ denote the minimum number of $m-$sums such a point set can span.

\begin{corollary}\label{Geom application thm}
For every $N\geq k > 0$ and $m\geq 1$, where $N$ is the power of two,  we have 
$$ 
\psi_{m}\left(N,k,k^{\log_{2} (m+1)}\right)=O\left(N^{\log_{2} (m+1)}\right).
$$
\end{corollary}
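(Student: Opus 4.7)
The plan is to produce an explicit extremal configuration. Write $N=2^{d}$ and begin with the Boolean cube $\{0,1\}^{d}\subset\Z^{d}$, whose $m$-fold sumset is $\{0,1,\ldots,m\}^{d}$ of size $(m+1)^{d}=N^{\log_{2}(m+1)}$. To transfer this to $\R^{2}$, I would fix $d$ reals $\alpha_{1},\ldots,\alpha_{d}$ that are linearly independent over $\Q$ (for instance $\alpha_{j}=\sqrt{p_{j}}$ with $p_{j}$ the $j$-th prime) and define
$$
\phi : \{0,1,\ldots,m\}^{d}\to\R^{2},\qquad \phi(x_{1},\ldots,x_{d})=\Big(\sum_{j=1}^{d}x_{j}\alpha_{j},\,0\Big).
$$
Because any nontrivial relation $\sum (x_{j}-y_{j})\alpha_{j}=0$ with $x_{j}-y_{j}\in\{-m,\ldots,m\}$ is forbidden by $\Q$-linear independence, $\phi$ is injective on the whole sumset, so it is a Freiman-isomorphism in the sense that $|\phi(kA+\ell B)|=|kA+\ell B|$ for any $A,B\subseteq\{0,1\}^{d}$ and any $k+\ell\leq m$. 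I then take $P:=\phi(\{0,1\}^{d})$, which has $|P|=2^{d}=N$ points in $\R^{2}$.

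Next I would check that $P$ witnesses the required upper bound. The total number of $m$-sums spanned by $P$ is exactly
$$
|mP|=|m\cdot\{0,1\}^{d}|=(m+1)^{d}=N^{\log_{2}(m+1)},
$$
giving the $O(N^{\log_{2}(m+1)})$ upper bound. For the ``every $k$ points'' hypothesis, take any $A'\subseteq P$ with $|A'|=k$ and let $A=\phi^{-1}(A')\subseteq\{0,1\}^{d}$. Applying Theorem~\ref{mth01} with $n=m$ and $f_{1}=\cdots=f_{m}=\mathbf{1}_{A}$ (equivalently, the sumset form \eqref{hyperc} with $A_{1}=\cdots=A_{m}=A$) yields
$$
|\underbrace{A+A+\cdots+A}_{m}|\;\geq\;|A|^{m\cdot\log_{2}(m+1)/m}\;=\;k^{\log_{2}(m+1)},
$$
and since $\phi$ is Freiman-isomorphic on $m$-fold sums, $|mA'|=|mA|\geq k^{\log_{2}(m+1)}$, as required.

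Combining these two facts with the definition of $\psi_{m}$ immediately gives
$$
\psi_{m}\bigl(N,k,k^{\log_{2}(m+1)}\bigr)\;\leq\;|mP|\;=\;(m+1)^{d}\;=\;N^{\log_{2}(m+1)}.
$$
There is no real obstacle in this argument: the entire content is to recognize that the Freiman-isomorphic image of $\{0,1\}^{d}$ in $\R^{2}$ has both a minimal global $m$-sumset (by direct computation) and a large $m$-sumset on every $k$-subset (by Theorem~\ref{mth01}). The only delicate point is the embedding $\phi$, which must preserve sumset cardinalities up to multiplicity $m$; choosing $\Q$-linearly independent coordinates handles this cleanly.
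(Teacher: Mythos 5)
Your proof is correct and follows essentially the same route as the paper: project $\{0,1\}^d$ generically onto a line in $\R^2$ using $\Q$-linearly independent coordinates so that $m$-sum counts are preserved, then read off the global bound $(m+1)^d = N^{\log_2(m+1)}$ by direct computation and the local bound $k^{\log_2(m+1)}$ for every $k$-subset from inequality (\ref{hyperc}). The only cosmetic difference is that the paper's projection includes a constant offset ($P_0=\{1\}$), which is immaterial for counting sums.
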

This can be seen as an extension of the main theorem in \cite{FLS}. As in \cite{FLS} the hypercube $\{0, 1\}^{n} \subset \mathbb{R}^{n}$ can be projected (which is also bijection onto its image) by a transformation $T$ onto the real line $\mathbb{R}$ giving a set $P_{n}$ of cardinality $N=2^{n}$  so that any subset of $\tilde{A}\subset P_{n}$ has the same number of distinct m-sums as the set $A\subset\{0,1\}^{n}$, where  $T(A)=\tilde{A}$. Thus the corollary is a direct consequence of (\ref{hyperc}). The projection is defined as follows: Let $n\in\mathbb{Z}^{+}$. Choose  
$r_1,r_2,\dots,r_n\in\mathbb{R}\setminus\mathbb{Q}$ such that $r_i> 0$ for all $1\leq i\leq n$ and the only integer solution to
$x_1\cdot r_1 + x_2\cdot r_2 + \cdots + x_{n} \cdot r_{n} = 0,$ 
is $x_1=x_2=\cdots=x_{n}=0$. For every integer $0\le j \le n$, we define a set $P_j$ of $2^j$ real numbers, as follows.
We set $P_0 = \{1\}$.
For $1\le j \le n$, we define $P_j = P_{j-1} \bigcup (r_j + P_{j-1})$.

In the next section we present the proofs of the main results.

\section{Proof of the main results}

The next theorem shows the functional inequalities (\ref{functionf}), (\ref{functionf2}) and (\ref{functionf3}) hold for all dimensions $d\geq 1$  if and only if they hold in dimension $d=1$. 
\begin{theorem}[Compressing dimension]\label{mth001} Fix positive numbers $p_{1}, \ldots, p_{n}>0$ and nonempty finite sets $S_{1}, \ldots, S_{n} \subset \mathbb{Z}$. The inequality 
\begin{align}\label{fbur}
 \sum_{z \in \mathbb{Z}^{d}}\max_{x^{(1)}+\ldots+x^{(n)}=z} \prod_{j=1}^{n}f^{p_{j}}_{j}(x^{(j)})  \geq \prod_{j=1}^{n}\left(\sum_{x \in \mathbb{Z}^{d}}f_{j}(x)\right)^{p_{j}}
\end{align}
holds for all non-negative functions $f_{j}$ supported on $S_{j}^{d}$, $j=1, \ldots, n$,   and all $d\geq 1$,  if and only if it holds for $d=1$. 
\end{theorem}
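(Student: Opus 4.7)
My plan is to argue by induction on $d$ with the $d=1$ case as the base. The ``only if'' direction is immediate (specialize to $d=1$), so fix $d \geq 2$ and suppose (\ref{fbur}) holds in dimension $d-1$ (and in dimension one by hypothesis).

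Split each $x \in \Z^d$ as $(x',x_d) \in \Z^{d-1}\times\Z$, and decompose $z = (z',z_d)$ and $x^{(j)} = (x'^{(j)},y_j)$ the same way. For each $y \in \Z$ introduce the slice $g_j^{y}(x') := f_j(x',y)$, which is supported on $S_j^{d-1}$ whenever $y \in S_j$ and vanishes otherwise; its marginal $h_j(y) := \sum_{x'\in \Z^{d-1}} g_j^{y}(x')$ is a non-negative function on $\Z$ supported on $S_j$. The constraint $\sum_j x^{(j)} = z$ decouples as $\sum_j x'^{(j)} = z'$ together with $\sum_j y_j = z_d$, so the left-hand side of (\ref{fbur}) equals
\[
\sum_{z_d\in\Z}\sum_{z'\in\Z^{d-1}}\max_{\substack{y_1+\cdots+y_n=z_d\\ x'^{(1)}+\cdots+x'^{(n)}=z'}}\prod_{j=1}^n \bigl(g_j^{y_j}(x'^{(j)})\bigr)^{p_j}.
\]
The trivial bound $\sum_{\alpha}\max_{\beta}F(\alpha,\beta)\geq\max_{\beta}\sum_{\alpha}F(\alpha,\beta)$, applied for each fixed $z_d$ with $\alpha = z'$ and $\beta = (y_1,\ldots,y_n)$, lower-bounds the expression by
\[
\sum_{z_d\in\Z}\max_{y_1+\cdots+y_n=z_d}\sum_{z'\in\Z^{d-1}}\max_{x'^{(1)}+\cdots+x'^{(n)}=z'}\prod_{j=1}^n\bigl(g_j^{y_j}(x'^{(j)})\bigr)^{p_j}.
\]

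For each fixed tuple $(y_1,\ldots,y_n)$ the inner double sum is exactly the left-hand side of (\ref{fbur}) in dimension $d-1$ applied to the slices $g_j^{y_j}$, which are supported on $S_j^{d-1}$; the inductive hypothesis bounds it below by $\prod_j h_j(y_j)^{p_j}$. The whole expression is therefore at least $\sum_{z_d}\max_{y_1+\cdots+y_n=z_d}\prod_j h_j(y_j)^{p_j}$, which is precisely the left-hand side of (\ref{fbur}) in dimension one for the functions $h_1,\ldots,h_n$. Invoking the assumed $d=1$ case produces the bound $\prod_j \|h_j\|_1^{p_j} = \prod_j \|f_j\|_1^{p_j}$, and the induction closes.

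The only place where information is potentially lost is the sum-max swap, but this is valid termwise because any fixed $\beta$ gives a lower bound. I do not anticipate any serious obstacle beyond correctly pairing the $(d-1)$-dimensional slicing with the one-dimensional step at the end; the tensorization is clean because the sup-convolution factorizes exactly over coordinates, with no H\"older-type refinement required. Note that the argument treats the exponents $p_j$ and the base sets $S_j$ as purely auxiliary data that are carried along unchanged from one dimension to the next, which is why the reduction is to the identical one-dimensional statement rather than to some weaker relative.
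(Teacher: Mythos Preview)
Your proof is correct and follows essentially the same tensorization argument as the paper: split off one coordinate, use the trivial inequality $\sum_{\alpha}\max_{\beta}\ge\max_{\beta}\sum_{\alpha}$ to push the sum past the outer max, apply the one-dimensional inequality, and induct. The only cosmetic difference is the order in which the pieces are consumed---the paper first applies the $d=1$ case to the last coordinate (obtaining the marginals $\overline{f}_j$) and then iterates down through the remaining $d-1$ coordinates, whereas you first invoke the $(d-1)$-dimensional inductive hypothesis on the slices and then finish with $d=1$; these are dual orderings of the same argument.
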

\begin{proof}
Suppose (\ref{fbur}) holds for $d=1$. For each $j=1, \ldots, n,$ let $x^{(j)}=(\overline{x}^{(j)}, \tilde{x}^{(j)})\in \mathbb{Z}^{d}$, where $\overline{x}^{(j)}\in \mathbb{Z}^{d-1}$. Similarly let $z=(\overline{z}, \tilde{z}) \in \mathbb{Z}^{d}$. Set $\overline{f_{j}}(\overline{x}^{(j)}) = \sum_{\tilde{x}^{(j)}\in \mathbb{Z}} f_{j}(x^{(j)})$ for each $j=1, \ldots, n$. We have 
\begin{align*}
   &\sum_{z \in \mathbb{Z}^{d}}\max_{x^{(1)}+\ldots+x^{(n)}=z} \prod_{j=1}^{n}f^{p_{j}}_{j}(x^{(j)})  = \\
   & \sum_{\overline{z} \in \mathbb{Z}^{d-1}}\sum_{\tilde{z} \in \mathbb{Z}} \max_{\overline{x}^{(1)}+\ldots+\overline{x}^{(n)}=\overline{z}} \max_{\tilde{x}^{(1)}+\ldots+\tilde{x}^{(n)}=\tilde{z}}\prod_{j=1}^{n}f^{p_{j}}_{j}(\overline{x}^{(j)}, \tilde{x}^{(j)})\geq \\
   & \sum_{\overline{z} \in \mathbb{Z}^{d-1}} \max_{\overline{x}^{(1)}+\ldots+\overline{x}^{(n)}=\overline{z}} \sum_{\tilde{z} \in \mathbb{Z}} \max_{\tilde{x}^{(1)}+\ldots+\tilde{x}^{(n)}=\tilde{z}}\prod_{j=1}^{n}f^{p_{j}}_{j}(\overline{x}^{(j)}, \tilde{x}^{(j)})\stackrel{\text{case}\, d=1}{\geq} \\
   &\sum_{\overline{z} \in \mathbb{Z}^{d-1}} \max_{\overline{x}^{(1)}+\ldots+\overline{x}^{(n)}=\overline{z}} \prod_{j=1}^{n}\overline{f}^{p_{j}}_{j}(\overline{x}^{(j)})\stackrel{\text{iterate}}{\geq}\ldots \geq \prod_{j=1}^{n}\left(\sum_{x \in \mathbb{Z}^{d}}f_{j}(x)\right)^{p_{j}}.
\end{align*}
\end{proof}


In the case $d=1$, $S_{1}=\ldots=S_{n}=\{0,1, \ldots, m\}$, after a change of variables, the inequality (\ref{fbur}) can be rewritten as follows: for any non-negative $y_{i}^{(j)}$ with $i=0,\ldots, m$, and $j=1, \ldots, n,$ we have 
\begin{align}\label{eq:general-inequality}
        \sum_{k=0}^{nm}\max_{i_1+\dots+i_n=k} y_{i_1}^{(1)}y_{i_2}^{(2)}\dots y_{i_n}^{(n)}
        \geq 
        \prod_{j=1}^n \left(\sum_{i=0}^{m} (y_{i}^{(j)})^{1/p_{j}}\right)^{p_{j}}.
\end{align}

It turns out that to verify (\ref{eq:general-inequality}) for some $p_{1}, \ldots, p_{n}>0$ it suffices to consider the case when for each $j=1,\dots, n,$ the sequence  $\{y_i^{(j)}\}_{i=0}^{m}$ is non-increasing. 

\begin{lemma}\label{perestanovka}
  Let $y_i^{(j)}$ with $i=0,1,\dots, m$ and $j=1,2,\dots, n$ be non-negative real numbers. Then if for each $j=1,\dots, n$ we sort all $x_i^{(j)}$ from the largest to the smallest, the value of the left hand side of \eqref{eq:general-inequality} will not increase.
\end{lemma}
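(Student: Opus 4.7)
I plan to prove this via a layer-cake decomposition combined with the basic sumset inequality $|A+B|\geq|A|+|B|-1$ for finite subsets of $\mathbb{Z}$. The first step is to use $\|h\|_1=\int_0^\infty|\{h\geq t\}|\,dt$, applied to $h(k)=\max_{i_1+\dots+i_n=k}\prod_j y_{i_j}^{(j)}$, to rewrite the left-hand side of \eqref{eq:general-inequality} as
\begin{align*}
\int_0^\infty \left|\left\{i_1+\dots+i_n\;:\;\prod_{j} y_{i_j}^{(j)}\geq t\right\}\right|dt.
\end{align*}
Setting $A^{(j)}_s:=\{i:y_i^{(j)}\geq s\}$, I would observe that $\prod_j y_{i_j}^{(j)}\geq t$ is equivalent to the existence of positive $s_1,\dots,s_n$ with $\prod_j s_j\geq t$ and $i_j\in A^{(j)}_{s_j}$ for each $j$; projecting to the sum $i_1+\dots+i_n$ then expresses the level set as the union of sumsets
\begin{align*}
\bigcup_{\substack{s_1,\dots,s_n>0\\ s_1\cdots s_n\geq t}}\bigl(A^{(1)}_{s_1}+\dots+A^{(n)}_{s_n}\bigr).
\end{align*}

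The key observation is the following asymmetry between the sorted and unsorted sides. After replacing each $y^{(j)}$ by its non-increasing rearrangement $\tilde y^{(j)}$, the set $\tilde A^{(j)}_s$ is the initial segment $\{0,1,\dots,|A^{(j)}_s|-1\}$ of the same cardinality, any Minkowski sum of such initial segments is again an initial segment of size exactly $\sum_j|A^{(j)}_{s_j}|-n+1$, and a union of initial segments all containing $0$ is itself an initial segment whose cardinality equals the \emph{maximum} of the individual sizes. For the unsorted sequences, the iterated elementary sumset bound gives $|A^{(1)}_{s_1}+\dots+A^{(n)}_{s_n}|\geq \sum_j|A^{(j)}_{s_j}|-n+1$ for every admissible tuple, and the trivial $|U\cup V|\geq|U|$ then shows that the unsorted level-set cardinality is at least $\max_{\prod_j s_j\geq t}\bigl(\sum_j|A^{(j)}_{s_j}|-n+1\bigr)$, which matches the sorted cardinality. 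Integrating over $t$ yields the lemma.

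The step I expect to require the most care is the collapse observation in the previous paragraph: after rearrangement, the union over the continuum of tuples $(s_j)$ reduces to a single initial segment of maximum size, and this is precisely the quantity controlled by the pointwise sumset bound on the unsorted side. Without this reduction, comparing unions of sumsets directly would be substantially more delicate, since a pointwise-in-$(s_1,\dots,s_n)$ comparison of $|A^{(1)}_{s_1}+\dots+A^{(n)}_{s_n}|$ does not by itself imply a comparison of the unions.
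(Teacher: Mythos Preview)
Your proposal is correct and follows essentially the same route as the paper's proof: both use the layer-cake identity to reduce to comparing level sets, express each level set as a union of Minkowski sums of superlevel sets $A^{(j)}_{s_j}$, observe that after sorting these become nested initial segments so the union collapses to the single maximal one of size $\max\bigl(\sum_j |A^{(j)}_{s_j}|-n+1\bigr)$, and bound the unsorted side below by the same quantity via the elementary sumset inequality $|A+B|\geq |A|+|B|-1$ in $\mathbb{Z}$ (which the paper cites as Cauchy--Davenport). The only cosmetic difference is that you take the union over $\prod_j s_j\geq t$ while the paper takes it over $\prod_j s_j=t$, but these yield the same set.
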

\begin{proof}
    It suffices \footnote{Indeed, this follows from the fact that the sum of numbers can be written as an integral of the cardinality of the set of indices corresponding to numbers at least $t$.} to show that for any $t\geq 0$ the cardinality of the set 
    \[
    \mathcal{K}(t):=\{k: \, \exists i_1+\dots+ i_n =k,\, x_{i_1}^{(1)}x_{i_2}^{(2)}\dots x_{i_n}^{(n)} \geq t \} \subset \{0, 1, \dots, nm+1\}
    \]
    will not increase. This can be written as an infinite union 
    \[
    \cup_{t_1\dots t_n=t} \{i_1: x_{i_1}^{(1)} \geq t_1\}+\dots +\{i_n: x_{i_n}^{(n)} \geq t_n\},
    \]
    where the sum is understood as a sum of subsets of $\mathbb{Z}$. Rearranging $x_i^{(j)}$'s for each $j$ does not change the cardinality of the sets involved. After the sort each of the sets becomes an interval of the form $[0, L)$ and so the union of all these sums is equal to its maximal value which, in turn, is equal to 
    \[
\max_{t_1\dots t_n=t} 1-n+\sum_{j=1}^n \left|\left\{x_{i_j}^{(j)} \geq t_j\right \}\right|,
    \]
    which is a lower bound for the original value as follows from the Cauchy-Davenport theorem.
\end{proof}

\begin{rem}\label{genrem1}
To prove Theorem~\ref{mth01},  Theorem~\ref{mth02}, and Theorem~\ref{trisets} it suffices to verify (\ref{fbur}) in the case $d=1$, A) $S_{1}=S_{2}=\ldots=S_{n}=\{0,1\}$, $p_{1}=\ldots=p_{n}=\frac{\log_{2}(n+1)}{n}$; B) $n=2$, $S_{1}=S_{2}=\{0,1,2\}$, $p_{1}=p_{2}=\frac{\log(5)}{2\log(3)}$; C) $n=3$, $S_{1}=\{0,1\}$, $S_{2}, S_{3} \subset \mathbb{Z}$ are arbitrary finite, $p_{1}=1/c, p_{2}=1/p, p_{3}=1/q$. 
\end{rem}

\subsection{The proof of Theorem~\ref{mth01}}

Our goal is to prove the following inequality
\begin{align}\label{npoint}
 \sum_{k=0}^{n}\left(\max_{i_1+\dots+i_n=k} y_{i_1}^{(1)}y_{i_2}^{(2)}\dots y_{i_n}^{(n)}\right)^p 
        \geq 
        \prod_{j=1}^n \left( y_{0}^{(j)}+y_{1}^{(j)}\right)^p.
\end{align}
By homogeneity we can assume that for each $j=1, \ldots, n$ we have  $y_{0}^{(j)}+y_{1}^{(j)}=1$. Also without loss of generality we can assume $y_{0}^{(j)}\geq y_{0}^{(j+1)}$ for $j=1, \ldots, n-1$, and $y_{0}^{(1)}\geq 1/2$. It will be convenient to denote $x_{i,0}=y^{(i)}_{0}$ for all $i=1, \ldots, n$, and also set $q=pn$. The desired inequality takes the form
\begin{align}\label{abakidev}
\prod_{i=1}^{n}x^{q/n}_{i,0}+\prod_{i=1}^{n}(1-x_{i,0})^{q/n}+
    \sum_{\ell=1}^{n-1}\prod_{i=1}^{\ell}x^{q/n}_{i,0}\prod_{i=\ell+1}^{n} (1-x_{i,0})^{q/n} \geq 1. 
\end{align}

For each $i=1, \ldots, n$, we define
$$
w_{i}=\left[\frac{1-x_{n+1-i,0}}{x_{n+1-i,0}}\right]^{q/n}.
$$
The inequality (\ref{abakidev}) reduces to verify that
\begin{equation}\label{key inequality for Thm 1}
1+w_1+w_1w_2+w_1w_2w_3+\dots+\prod_{i=1}^{n}w_i\geq \prod_{i=1}^{n}(1+w^{n/q}_i)^{q/n}.
\end{equation}
Observe that since  $\{x_{i,0}\}_{i=1}^{n}$ is nonincreasing, then $\{w_i\}_{i=1}^{n}$ is also nonincreasing. The inequality $\eqref{key inequality for Thm 1}$ is a consequence of Lemma~\ref{Thm 1 key lemma 1} and Lemma~\ref{Thm 1 key lemma 2}.

\begin{lemma}\label{Thm 1 key lemma 1}
We have 
$$
\left(1+w_1+w_1w_2+w_1w_2w_3+\dots+\prod_{i=1}^{n}w_i\right)^n\geq \prod_{i=1}^{n}\left(\sum_{k=0}^{n}w^k_i\right).
$$
for any nonnegative numbers $w_{1}, \ldots, w_{n},$ with $w_1\geq w_2\geq\ldots\geq w_n$. 
\end{lemma}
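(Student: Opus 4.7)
The plan is to expand both sides as sums indexed by tuples $(k_1,\ldots,k_n) \in \{0,\ldots,n\}^n$, group the resulting terms by the underlying partition, and then compare them pair by pair. Writing $(1 + w_1 + w_1 w_2 + \cdots + w_1 \cdots w_n)^n = \sum_{\mathbf{k}} \prod_{i=1}^n \prod_{j=1}^{k_i} w_j$, the inner double product equals $\prod_{j=1}^n w_j^{\alpha_j(\mathbf{k})}$, where $\alpha_j(\mathbf{k}) := \#\{i : k_i \geq j\}$; this exponent vector is automatically weakly decreasing, depends only on the multiset of $\mathbf{k}$, and is the conjugate of the sorted tuple. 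Analogously, $\prod_{i=1}^n \sum_{k=0}^n w_i^k = \sum_{\mathbf{k}} \prod_i w_i^{k_i}$. Both sides are thus sums over the same index set, and for each $\mathbf{k}$ the LHS and RHS exponent vectors are conjugate partitions in the $n \times n$ box.

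Grouping by the sorted tuple $\lambda = \mathrm{sort}(\mathbf{k})$ and letting $c_\lambda$ denote the number of distinct orderings of $\lambda$, the LHS contribution from $\lambda$ is $c_\lambda\, w^{\lambda^T}$, while the RHS contribution is the monomial symmetric function $m_\lambda(w) := \sum_{\mathbf{b} \in \mathrm{Perm}(\lambda)} w^{\mathbf{b}}$. I would then pair each partition $\lambda$ with its conjugate $\lambda^T$. For self-conjugate $\lambda$ the pair reduces to $c_\lambda w^\lambda \geq m_\lambda(w)$, which is immediate from the rearrangement inequality: under $w_1 \geq \cdots \geq w_n$, every summand $w^{\mathbf{b}}$ of $m_\lambda$ satisfies $w^{\mathbf{b}} \leq w^\lambda$, and there are exactly $c_\lambda$ such summands.

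The main obstacle is the non-self-conjugate case, where the required inequality becomes
\[
c_\lambda\, w^{\lambda^T} + c_{\lambda^T}\, w^\lambda \;\geq\; m_\lambda(w) + m_{\lambda^T}(w).
\]
The naive Muirhead-type bounds $m_\lambda(w) \leq c_\lambda w^\lambda$ and $m_{\lambda^T}(w) \leq c_{\lambda^T} w^{\lambda^T}$ do not suffice, because the swapped sum $c_\lambda w^{\lambda^T} + c_{\lambda^T} w^\lambda$ can be strictly smaller than $c_\lambda w^\lambda + c_{\lambda^T} w^{\lambda^T}$ (as already happens for $\lambda = (2,0)$, $\lambda^T = (1,1)$ at $n=2$, where both $c_\lambda > c_{\lambda^T}$ and $w^\lambda > w^{\lambda^T}$). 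I expect to close the gap by expanding the difference $c_\lambda w^{\lambda^T} + c_{\lambda^T} w^\lambda - m_\lambda(w) - m_{\lambda^T}(w)$ as a sum over permutations and reducing it to elementary two-variable comparisons of the form $(w_i - w_j)(w_i^a w_j^b - w_i^b w_j^a) \geq 0$, valid whenever $w_i \geq w_j$ and $a \geq b$. That equality holds throughout at the diagonal $w_1 = \cdots = w_n$ provides a consistency check confirming the cancellations are arranged correctly, and an inductive grouping by the value of $|\lambda|$ should allow the elementary pairwise estimates to be combined into the full pair inequality.
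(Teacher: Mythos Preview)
Your combinatorial expansion is set up correctly: writing the left side as $\sum_{\mathbf{k}} w^{\lambda^T}$ (with $\lambda=\mathrm{sort}(\mathbf{k})$) and the right side as $\sum_{\mathbf{k}} w^{\mathbf{k}}$, then grouping by $\lambda$, does reduce the problem to comparing $c_\lambda w^{\lambda^T}$ against $m_\lambda(w)$ summed over all $\lambda\subseteq n\times n$. The difficulty is that your proposed pairwise inequality
\[
c_\lambda\, w^{\lambda^T} + c_{\lambda^T}\, w^{\lambda} \;\ge\; m_\lambda(w) + m_{\lambda^T}(w)
\]
is \emph{false} in general, so no amount of rewriting it as sums of two-variable nonnegativity terms can succeed. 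Take $n=3$, $\lambda=(3,0,0)$, $\lambda^T=(1,1,1)$. Then $c_\lambda=3$, $c_{\lambda^T}=1$, and the pair inequality reads $3w_1w_2w_3+w_1^3 \ge w_1^3+w_2^3+w_3^3+w_1w_2w_3$, i.e.\ $2w_1w_2w_3 \ge w_2^3+w_3^3$. At $w_1=w_2=1$, $w_3=0$ this says $0\ge 1$. So the deficit in this pair must be covered by surplus from another pair (here the self-conjugate $(2,1,0)$ supplies it), and your argument collapses unless you specify exactly how the borrowing works across pairs. The vague final remark about ``inductive grouping by the value of $|\lambda|$'' is a different scheme from the one you just analysed, and you give no argument that it works either.

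By contrast, the paper avoids any termwise matching. It takes $n$-th roots, writes $f_n(w)=(1+w+\cdots+w^n)^{1/n}$, and proves $1+w_1+w_1w_2+\cdots+w_1\cdots w_n \ge \prod_i f_n(w_i)$ by induction on $n$: apply the $(n-1)$-variable case to $(w_2,\ldots,w_n)$, use the identity $w=(f_n(w)^n-1)/f_{n-1}(w)^{n-1}$, and reduce to the single-variable comparison $f_{n-1}(w_i)/f_n(w_i)\ge f_{n-1}(w_1)/f_n(w_1)$. That last step requires a monotonicity lemma for $g_n(w)=f_{n-1}(w)/f_n(w)$ together with the symmetry $(w_1,\ldots,w_n)\mapsto(1/w_n,\ldots,1/w_1)$, which lets one assume $w_1w_n\ge 1$. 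This analytic route sidesteps entirely the cross-pair cancellations that your combinatorial approach would have to organise.
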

For each positive integer $k$ consider  $f_k(w)=(1+w+\dots+w^k)^{1/k} $ on $\mathbb{R}_+$. We need the following technical statement. 

\begin{lemma}\label{lem:increasing-ratio}
     For every $n\geq 2$ let $g_n(w)=f_{n-1}(w)/f_{n}(w)$. Then $g_n$ satisfies $g_n(1/w)=g_n(w)$, is increasing on $[0, 1]$, and decreasing on $[1,\infty)$.
\end{lemma}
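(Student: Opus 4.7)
The plan is to reduce the lemma to a single polynomial inequality and then dispatch it with AM-GM. First, the functional equation $g_n(1/w) = g_n(w)$ is immediate for $w>0$: from $S_k(1/w) = w^{-k} S_k(w)$ one obtains $f_k(1/w) = w^{-1} f_k(w)$, so the factors of $w^{-1}$ cancel in the ratio $g_n = f_{n-1}/f_n$. Writing $S_k(w) = 1 + w + \cdots + w^k$, the logarithmic derivative satisfies
\[
(\log g_n)'(w) \;=\; \frac{M_n(w)}{n(n-1)\, S_{n-1}(w)\, S_n(w)}, \qquad M_n(w) := n\, S_n\, S_{n-1}' - (n-1)\, S_{n-1}\, S_n'.
\]
Since the denominator is positive on $[0,\infty)$, both monotonicity claims follow from showing $M_n \geq 0$ on $[0,1]$ and $M_n \leq 0$ on $[1,\infty)$.

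The crucial identity, which I would verify by expanding $M_n$ using $S_k(w) = (1-w^{k+1})/(1-w)$ and $(1-w)^2 S_k'(w) = 1 - (k+1) w^k + k w^{k+1}$, is
\[
(1-w)\, M_n(w) \;=\; S_{n-1}(w)^2 \;-\; n^2 w^{n-1}.
\]
Explicitly, the expansion yields $(1-w)^3 M_n = (1-w^n)^2 - n^2 w^{n-1}(1-w)^2$, and the factorization $1 - w^n = (1-w)\, S_{n-1}(w)$ then delivers the displayed identity after cancelling $(1-w)^2$. I expect this algebraic manipulation to be the only non-transparent step of the argument; once this identity is in hand, everything else is essentially a one-liner.

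Finally, AM-GM applied to $1, w, w^2, \ldots, w^{n-1}$ (for $w \geq 0$) gives
\[
\frac{S_{n-1}(w)}{n} \;=\; \frac{1 + w + \cdots + w^{n-1}}{n} \;\geq\; \bigl(w^{\,0+1+\cdots+(n-1)}\bigr)^{1/n} \;=\; w^{(n-1)/2},
\]
so $S_{n-1}(w)^2 \geq n^2 w^{n-1}$ for all $w \geq 0$, with equality only at $w=1$. Therefore $(1-w)\, M_n(w) \geq 0$ on $[0,\infty)$: on $[0,1)$ the factor $(1-w)$ is positive, forcing $M_n \geq 0$; on $(1,\infty)$ it is negative, forcing $M_n \leq 0$; and $M_n(1) = 0$ by direct substitution. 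This completes the proof.
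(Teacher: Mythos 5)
Your proof is correct, and it takes a genuinely different route from the paper's. The paper first uses the symmetry $g_n(1/w) = g_n(w)$ to reduce to proving monotonicity on $[1,\infty)$ only; it then cross-multiplies the inequality $\tfrac{1}{n}\tfrac{S_n'}{S_n} \leq \tfrac{1}{n-1}\tfrac{S_{n-1}'}{S_{n-1}}$, subtracts a common part, and pairs up exponents to reduce to the inequality $\sum_{k=1}^{n-1}\tfrac{k(k+1)}{2}\,w^{2n-2-k} \geq \sum_{k=1}^{n-1}\tfrac{k(k+1)}{2}\,w^{k-1}$, which is immediate termwise when $w\geq 1$. You instead isolate the numerator $M_n = n S_n S_{n-1}' - (n-1)S_{n-1}S_n'$ of the logarithmic derivative, prove the closed-form factorization $(1-w)M_n = S_{n-1}^2 - n^2 w^{n-1}$ (I verified the intermediate identity $(1-w)^3 M_n = (1-w^n)^2 - n^2 w^{n-1}(1-w)^2$; the coefficient match is exact), and then kill the right-hand side with AM--GM applied to $1, w, \dots, w^{n-1}$. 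Your route is arguably more conceptual: the factorization makes the sign behavior transparent on both sides of $w=1$ simultaneously, so the symmetry reduction is never needed, and the remaining inequality $S_{n-1}(w) \geq n\,w^{(n-1)/2}$ is a one-line application of AM--GM. The paper's route avoids finding the factorization but requires the reader to track a messier expansion and to invoke the symmetry up front. Both are complete and correct.
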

\begin{proof}[Proof of Lemma \ref{lem:increasing-ratio}]
    The equality $g_n(w)=g_n(1/w)$ follows from the fact that $f_k(1/w)=f_k(w)/w$ for positive integers $k$. By the $w\rightarrow 1/w$ symmetry it suffices to prove that the function is decreasing for $w\geq 1$. Taking logarithmic derivative we observe that $(\ln(g_n(w)))'\leq 0$ is equivalent to have the inequality
    \[
    \frac{1}{n}\cdot \frac{1+2w+\dots+nw^{n-1}}{1+w+\dots+w^{n}}
    \geq 
    \frac{1}{n-1}\cdot \frac{1+2w+\dots+(n-1)w^{n-2}}{1+w+\dots+w^{n-1}}.
    \]
    After multiplication by denominators and substraction of the common part, i.e. subtracting $(n-1)(1+2w+\dots+(n-1)w^{n-2})(1+w+\dots+w^{n-1})$, we get
    \begin{align*}
    &(n-1)nw^{n-1}(1+w+\dots+w^{n-1})\\
    &\geq (1+w+\dots+w^{n-1})(1+2w+\dots+(n-1)w^{n-2})
    \\&\ \ \ \ +
    nw^n(1+2w+\dots+(n-1)w^{n-2}).
    \end{align*}
    Expanding both sides and subtracting we arrive at the inequality
    \[
    \sum_{k=1}^{n-1} \frac{k(k+1)}{2}\cdot w^{2n-2-k} \geq \sum_{k=1}^{n-1} \frac{k(k+1)}{2}\cdot w^{k-1}, 
    \]
    which is trivial for $w\geq 1$.
\end{proof}

\begin{proof}[Proof of Lemma \ref{Thm 1 key lemma 1}]
 We take the $n$-th root of both sides, so the inequality gets rewritten as
 $$
 1+w_1+w_1w_2+\dots+w_1w_2\dots w_n\geq \prod_{i=1}^{n} f_n(w_i).
 $$
 We will use induction on $n$, the case $n=1$ is trivial. Take $n\geq 2$ and assume the inequality is known for $n-1$ variables. Note that the inequality is invariant under the change $(w_1,\dots, w_n) \rightarrow (1/w_n,\dots, 1/w_1)$. So making this change, if needed, we may assume that $w_1w_n\geq 1$, which we do from now on. 

First, we apply the inequality to $(w_2,\dots,w_n)$ and write $w$ in place of $w_1$, so that the inequality is reduced to the following: 
\[
1+w\cdot \prod_{i=2}^n f_{n-1}(w_i) \geq f_n(w)\cdot \prod_{i=2}^n f_n(w_i).
\]
Now, we write $w$ as $(f_n(w)^n - 1)/f_{n-1}(w)^{n-1}$ and divide both sides by $f_n(w)^n$ to arrive at an equivalent inequality 
\[
\frac{1}{f_n(w)^n} + \left(1-\frac{1}{f_n(w)^n}\right)\cdot \prod_{i=2}^n \frac{f_{n-1}(w_i)}{f_{n-1}(w)} \geq \prod_{i=2}^n \frac{f_n(w_i)}{f_n(w)}.
\]
Note that since both $f_{n-1}$ and $f_n$ are increasing and $w\geq w_i$, both products are at most $1$, and since the left hand side of the inequality above is a convex combination of the product and 1, it suffices to prove that 
\[
\prod_{i=2}^n \frac{f_{n-1}(w_i)}{f_{n-1}(w)} \geq \prod_{i=2}^n \frac{f_n(w_i)}{f_n(w)},
\]
which would follow from the inequality term-wise, which, in turn, can be rewritten as 
\[
\frac{f_{n-1}(w_i)}{f_n(w_i)} \geq \frac{f_{n-1}(w)}{f_n(w)}.
\]
This is true since we have $ww_n\geq 1$, which implies that $w\geq 1/w_n\geq 1/w_i$ and so taking $y:=\max\{w_i, 1/w_i\}$ we have $1\leq y\leq w$ and by Lemma \ref{lem:increasing-ratio}
\[
\frac{f_{n-1}(w_i)}{f_n(w_i)} = \frac{f_{n-1}(y)}{f_n(y)}\geq \frac{f_{n-1}(w)}{f_n(w)}.
\]
\end{proof}


\begin{lemma}\label{Thm 1 key lemma 2}
Let $r=\log_{2}(n+1)$. We have 
\begin{equation*}
\sum_{i=0}^{n}w^i-(1+w^{n/r})^{r}\geq 0
\end{equation*}
for all $w\geq 0$.
\end{lemma}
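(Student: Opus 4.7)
The plan is to reduce the inequality to a hyperbolic trigonometric statement via the substitution $w = e^{2\tau}$ and then exploit the convexity of $\log\cosh$. A direct computation (using $2^r = n+1$) shows the claim is equivalent to
\[
\frac{\sinh((n+1)\tau)}{(n+1)\sinh\tau} \;\geq\; \cosh\!\Bigl(\tfrac{n\tau}{r}\Bigr)^{\!r}, \qquad \tau\in\mathbb{R};
\]
both sides are even in $\tau$ and equal $1$ at $\tau=0$.

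In the clean case $n+1 = 2^r$ (so $r$ is a positive integer), iterating $\sinh(2x)=2\sinh(x)\cosh(x)$ gives the telescoping identity $\sinh(2^r\tau)/\sinh\tau = 2^r\prod_{k=0}^{r-1}\cosh(2^k\tau)$. Since $(2^r-1)\tau/r = n\tau/r$ is the arithmetic mean of $\{2^k\tau\}_{k=0}^{r-1}$, Jensen's inequality for the convex function $\log\cosh$ closes the argument. An equivalent, purely algebraic version of the same proof: partitioning the exponents $i\in\{0,\ldots,n\}$ by the Hamming weight of their binary expansions gives classes of size $\binom{r}{\ell}$ whose arithmetic means equal $\ell n/r$, and AM-GM within each class followed by summation yields $\sum_{i=0}^n w^i \geq \sum_{\ell=0}^r \binom{r}{\ell}w^{\ell n/r} = (1+w^{n/r})^r$.

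For general $n$ (when $n+1$ is not a power of $2$) the binary-partition argument no longer applies, and the Descartes' rule of signs hinted at in the introduction becomes essential. I would analyze $F(w) := \sum_{i=0}^n w^i - (1+w^{n/r})^r$ directly: the reciprocal symmetry $F(w)=w^n F(1/w)$ reduces the problem to $w\in(0,1]$, while a direct calculation yields $F(1)=F'(1)=0$ and $F''(1)>0$ for $n\geq 2$ (this uses $r(n+2)>3n$ for $r=\log_2(n+1)$), making $w=1$ a strict local minimum. Together with $F(w)\sim w$ as $w\to 0^+$, a Descartes-style sign-count applied to $F$ directly when $r$ is rational via the substitution $u=w^{1/r}$, or to the derivatives of the trigonometric expression $\Psi(\tau)$ above otherwise, would rule out further positive zeros and force $F\geq 0$.

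The main obstacle is precisely the case of irrational $r$: the function $F$ is not a polynomial, so classical Descartes does not apply verbatim, and the Jensen/binary-partition argument fails. The fallback is to work with the trigonometric reformulation and perform a direct monotonicity analysis of $\Psi'(\tau) = (n+1)\coth((n+1)\tau) - \coth(\tau) - n\tanh(n\tau/r)$, using the boundary data $\Psi(0)=\Psi'(0)=0$, $\Psi(\tau)\to 0^+$ as $\tau\to\infty$, and local convexity at $\tau=0$, to conclude that $\Psi\geq 0$ on $(0,\infty)$.
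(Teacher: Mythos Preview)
Your hyperbolic substitution is correct, and for integer $r$ (i.e.\ $n+1$ a power of $2$) the Jensen/binary-partition argument is a clean, self-contained proof --- more elementary than anything the paper does for those special values of $n$.

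For general $n$, however, your proposal has a genuine gap. The suggestion to apply Descartes ``to $F$ directly when $r$ is rational via $u = w^{1/r}$'' does not work: after that substitution the term $(1+u^n)^r$ is still not a finite sum of monomials for non-integer $r$, so $F$ is not a pseudo-polynomial. (And for integer $n$, $r=\log_2(n+1)$ is rational only when it is an integer, so this branch adds nothing beyond the case you already handled.) The fallback --- a ``direct monotonicity analysis of $\Psi'$'' --- is the right instinct, but the boundary data you assemble ($\Psi(0)=\Psi'(0)=0$, $\Psi''(0)>0$, $\Psi\to 0^+$ at infinity) do not by themselves exclude oscillation; a zero-counting argument is still required, and you have not supplied one.

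The paper carries out exactly this missing step (in fact for the more general comparison $P_n(x)^{1/n}\ge P_m(x^s)^{1/(ms)}$). It takes the logarithmic derivative of the ratio and clears the denominators $(1-x^{\alpha_n})(1-x^{\beta_n})(1-x^{\alpha_m})(1-x^{\beta_m})$ to obtain a \emph{genuine} pseudo-polynomial $H_{n,m}(x)=\sum c_i x^{\gamma_i}$ with real exponents $\gamma_i$. Descartes' rule of signs holds verbatim for such generalized polynomials (P\'olya--Szeg\H{o}), so your worry about irrational $r$ is misplaced at the level of the rule itself. The actual work is to order the fourteen exponents, read off seven sign changes, note the quadruple root at $x=1$ contributed by the cleared denominators, and then use Rolle's theorem together with the $x\mapsto 1/x$ symmetry to force a contradiction from any putative extra zero. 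That explicit sign-count is the substance of the paper's proof and is what your outline lacks.
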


We will make a general observation which implies Lemma~\ref{Thm 1 key lemma 2}, and which also will be used later  in proofs of other technical statements. The general observation says that if we consider the sequence of polynomials 
\begin{align*}
    P_{n}(x) = 1+x+x^{2}+\ldots+x^{n}, 
\end{align*}
then the functions $(P_{n}(x))^{1/n}$ and $(P_{m}(x))^{1/m}$ behave similarly at infinity. We can match their values at $x=1$ by replacing $(P_{m}(x))^{1/m}$ with $P_{m}(x^{s})^{1/(ms)},$ where $s=\frac{n\log(m+1)}{m\log(n+1)}$ solves the equation $(P_{n}(1))^{1/n}=P_{m}(1)^{1/(ms)}$. Then it turns out that $(P_{n}(x))^{1/n} \geq P_{m}(x^{s})^{1/(ms)}$ for all $x\geq 0$ if and only if $n\geq m$. The inequality $(P_{n}(x))^{1/n}\geq (P_{1}(x^{s}))^{1/s}$ is the same as in Lemma~\ref{Thm 1 key lemma 2}. The general observation follows from the following 

\begin{lemma}\label{lem:main}
    For any $n > m\geq 1$ and $x>0$ one has 
    $F_n(x)\geq F_m(x)$, where 
    $$
    F_{n}(x):=
P_n\left(x^{\tfrac{\log{(n+1)}}{n}}\right)^{\tfrac{1}{\log{(n+1)}}}.
    $$
\end{lemma}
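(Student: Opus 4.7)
My plan is to treat $n$ as a continuous parameter (using the closed form $P_n(y)=(y^{n+1}-1)/(y-1)$ for $y\ne 1$) and show $\partial_n F_n(x) \ge 0$ for every $x>0$; this yields the integer case $n>m$. First I would use the symmetry $F_n(1/x)=F_n(x)/x$, which follows from $P_n(1/y)=P_n(y)/y^n$, to reduce to $x>1$; the case $x=1$ is the equality $F_n(1)=e$.

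Second, for $x>1$ and $u=\log x$, the antiderivative $\int d\tau/((n+1)^\tau-1)=\log(1-(n+1)^{-\tau})/\log(n+1)$ will give me the representation
\[
\log F_n(x) \;=\; u \;+\; I_n(u), \qquad I_n(u) \;:=\; \int_{u/n}^{u(n+1)/n}\frac{d\tau}{(n+1)^\tau-1},
\]
so monotonicity of $F_n$ reduces to monotonicity of $I_n$. I would differentiate in $n$ via Leibniz's rule together with the identity
\[
\frac{\partial}{\partial n}\frac{1}{(n+1)^\tau-1} \;=\; \frac{\tau}{(n+1)\log(n+1)}\,\frac{\partial}{\partial\tau}\frac{1}{(n+1)^\tau-1};
\]
after integration by parts the interior contribution turns into additional boundary terms plus a multiple of $I_n(u)$ itself. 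Rescaling via $s=u\log(n+1)/n$, the non-negativity of $dI_n/dn$ reduces to the one-variable inequality
\[
\bigl((n+1)\log(n+1)-n\bigr)\phi_0(s) + (n+1)\bigl(n-\log(n+1)\bigr)\phi_0((n+1)s) \;\ge\; \frac{n}{s}\int_s^{(n+1)s}\phi_0(\tau)\,d\tau,
\]
where $\phi_0(t):=1/(e^t-1)$; the two coefficients on the left are both positive for $n\ge 1$ and sum to $n^2$.

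The hard part will be verifying this last one-variable inequality for every $s>0$. Expanding $\phi_0(t)=\sum_{k\ge 1}e^{-kt}$ turns both sides into explicit series, and the Frullani identity $\log(n+1)=\int_0^\infty(e^{-v}-e^{-(n+1)v})/v\,dv$ shows that the two sides match to leading order as $s\to 0^+$ (the $1/s$ terms cancel because $\alpha+\beta/(n+1)=n\log(n+1)$). The next-order Euler--Maclaurin correction reduces the problem to the elementary inequality $(n+1)\log(n+1)\le n(n+2)/2$ for $n\ge 1$, which I would verify by checking $g(n):=n(n+2)-2(n+1)\log(n+1)$ satisfies $g(1)>0$ and $g''(n)>0$. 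To push the inequality across all $s>0$, I would exploit the complete monotonicity of $\phi_0$ on $(0,\infty)$ (it is the Laplace transform of counting measure on $\mathbb{N}$), which lets me deduce the full inequality from a family of elementary exponential inequalities checked term by term in the series expansion and then reassembled using the Frullani identity.
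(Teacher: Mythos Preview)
Your approach is genuinely different from the paper's: the paper fixes integers $n=m+1$, computes the logarithmic derivative $\partial_x(\log F_n-\log F_m)$, clears denominators to obtain an explicit pseudo-polynomial $H_{n,m}(x)$ with at most seven sign changes, and then uses Descartes' rule of signs together with the symmetry $x\mapsto 1/x$ and Rolle's theorem to rule out any crossing of $F_n$ and $F_m$ on $(0,1)$. Your idea of treating $n$ as a continuous parameter and reducing $\partial_n\log F_n(x)\ge 0$ to the integral inequality
\[
\alpha\,\phi_0(s)+\beta\,\phi_0((n{+}1)s)\ \ge\ \frac{n}{s}\int_s^{(n+1)s}\phi_0(\tau)\,d\tau,
\qquad \phi_0(t)=\frac{1}{e^t-1},
\]
with $\alpha=(n{+}1)\log(n{+}1)-n$, $\beta=(n{+}1)(n-\log(n{+}1))$, is correct (I checked the derivation via Leibniz and integration by parts), and the asymptotics you state near $s=0$ are right: the $1/s$ terms cancel because $\alpha+\beta/(n{+}1)=n\log(n{+}1)$, and the $O(s)$ coefficient is $\tfrac{n}{12}\bigl(\tfrac{n(n+2)}{2}-(n{+}1)\log(n{+}1)\bigr)\ge 0$.

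The gap is in the last paragraph. ``Checking term by term in the series expansion'' cannot work as stated. Writing $\phi_0(t)=\sum_{k\ge 1}e^{-kt}$, the $k$-th term of the desired inequality becomes (after the harmless rescaling $t=ks$)
\[
\alpha e^{-t}+\beta e^{-(n+1)t}\ \ge\ \frac{n}{t}\bigl(e^{-t}-e^{-(n+1)t}\bigr),
\]
and this \emph{fails} for small $t$: both sides equal $n^2$ at $t=0$, while the first-order term in $t$ of (LHS$-$RHS) is $n\bigl((n{+}1)\log(n{+}1)-\tfrac{n(n+2)}{2}\bigr)\le 0$, exactly the opposite sign to what you need. (For $n=1$, $t=0.1$ one gets LHS~$\approx 0.852<0.860\approx$~RHS.) Thus complete monotonicity alone---i.e.\ reducing to pure exponentials---is not enough; the inequality uses the specific structure of $\phi_0$ and is only true after summation over $k$. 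Your phrase ``reassembled using the Frullani identity'' does not explain how the deficit at small $t$ in every individual term is to be compensated, and nothing in the proposal does. Until you supply a genuine argument for the displayed integral inequality valid for all $s>0$ (or a different route to $\partial_n I_n\ge 0$), the proof is incomplete. One option, closer in spirit to what the paper does, would be to study the sign pattern of $s\,\partial_s$ of the difference and bound its zeros; another would be to find a sharper convexity/majorization statement for $\phi_0$ that survives the weighting $(\alpha,\beta)$ with $\alpha<n^2/2$.
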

In order to prove the lemma we use the following 
\begin{proposition}\label{prop:properties}
    For every $n\geq 1$, the function $F_n$ satisfies $F_n(0)=1, F_n(1)=e, F_n(1/x)=F_n(x)/x, F_n'(1)=e/2$.
\end{proposition}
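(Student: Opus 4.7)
The proposed proof is a sequence of four elementary but well-coordinated computations; the main observation that makes everything work cleanly is that the two exponents appearing in the definition, namely $s:=\tfrac{\log(n+1)}{n}$ (inside) and $\tfrac{1}{\log(n+1)}$ (outside), are arranged precisely so that $s\cdot n\cdot\tfrac{1}{\log(n+1)}=1$ and $(n+1)^{1/\log(n+1)}=e$. Every identity will reduce to one of these two cancellations.

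First, to check $F_n(0)=1$ and $F_n(1)=e$ I simply substitute. At $x=0$ we have $P_n(0)=1$ and so $F_n(0)=1^{1/\log(n+1)}=1$. At $x=1$ we have $P_n(1)=n+1$, giving $F_n(1)=(n+1)^{1/\log(n+1)}=e$.

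Next, for the functional equation $F_n(1/x)=F_n(x)/x$ I use the palindromic nature of $P_n$: the identity
\begin{equation*}
P_n(1/y)=1+y^{-1}+\dots+y^{-n}=y^{-n}P_n(y)
\end{equation*}
applied with $y=x^s$ yields $P_n((1/x)^s)=x^{-sn}P_n(x^s)$. Raising to the power $1/\log(n+1)$ and using $sn/\log(n+1)=1$ produces exactly $F_n(1/x)=x^{-1}F_n(x)$.

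Finally, for $F_n'(1)=e/2$ I apply the chain rule to the composition $F_n(x)=\bigl(P_n(x^s)\bigr)^{1/\log(n+1)}$. Using $P_n(1)=n+1$ and $P_n'(1)=1+2+\cdots+n=\tfrac{n(n+1)}{2}$, a direct logarithmic differentiation at $x=1$ gives
\begin{equation*}
F_n'(1)=\frac{F_n(1)}{\log(n+1)}\cdot\frac{P_n'(1)}{P_n(1)}\cdot s=\frac{e}{\log(n+1)}\cdot\frac{n}{2}\cdot\frac{\log(n+1)}{n}=\frac{e}{2},
\end{equation*}
where the factors of $n$ and $\log(n+1)$ cancel by the same design principle noted at the outset. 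There is no real obstacle here; the only thing to be careful about is keeping track of the two exponents and verifying they collapse as expected, which is why I would organize the proof around the single bookkeeping identity $s\cdot n=\log(n+1)$.
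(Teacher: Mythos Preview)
Your proof is correct. For the first three assertions your argument matches the paper's (which simply declares them obvious), so the only point worth noting is the last identity. You compute $F_n'(1)$ directly via the chain rule and the value $P_n'(1)=\tfrac{n(n+1)}{2}$; the paper instead differentiates the already-established functional equation $F_n(1/x)=F_n(x)/x$ and evaluates at $x=1$, obtaining $-F_n'(1)=F_n'(1)-F_n(1)$ and hence $F_n'(1)=e/2$ with no reference to $P_n'$ at all. The paper's route is marginally slicker in that it avoids the explicit computation of $P_n'(1)$ and the bookkeeping with $s$, but your direct approach is equally valid and arguably more transparent about where the constant comes from.
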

\begin{proof}
   First three assertions are obvious and the last one follows by differentiating the functional equation and substituting $x=1$. 
\end{proof}
\begin{proof}[Proof of Lemma \ref{lem:main}]
Without loss of generality we may assume that $n=m+1$. By Proposition \ref{prop:properties} the equation $F_n(x)=F_m(x)$ has a root at $x=0$, a double root at $x=1$ and all other roots come in pairs $(x, 1/x)$. Since close to zero we have $F_n(x)\sim 1+x^{\tfrac{\log{(n+1)}}{n}}/\log{(n+1)}$, we know that $F_n(x) \geq F_m(x)$ in some neighbourhood of zero. Hence, if the statement of the lemma is false, we must have at least one $t\in (0, 1)$ for which $F_n(t)=F_m(t)$. Seeking a contradiction we assume that $F_n(t)=F_m(t)$ for some $t\in (0, 1)$. 


Let us compute the logarithmic derivative of $F_n$.

\begin{align*}
 \partial_x \log F_n(x) &= 
 \frac{1}{\log{(n+1)}}\times\\
 &\left(-\frac{\tfrac{(n+1)\log{(n+1)}}{n}\cdot x^{\tfrac{(n+1)\log{(n+1)}}{n}-1}}{1-x^{\tfrac{(n+1)\log{(n+1)}}{n}}} + \frac{\tfrac{\log{(n+1)}}{n}\cdot x^{\tfrac{\log{(n+1)}}{n}-1}}{1-x^{\tfrac{\log{(n+1)}}{n}}}\right)  
 \\&=
 \frac{1}{nx}\cdot \left(-\frac{(n+1)\cdot x^{\tfrac{(n+1)\log{(n+1)}}{n}}}{1-x^{\tfrac{(n+1)\log{(n+1)}}{n}}} + \frac{x^{\tfrac{\log{(n+1)}}{n}}}{1-x^{\tfrac{\log{(n+1)}}{n}}}\right) 
\\&=
 \frac{1}{nx}\cdot \left(-(n+1)\cdot \left[\frac{1}{1-x^{\tfrac{(n+1)\log{(n+1)}}{n}}}-1 \right]  \right.\\
 &+\left. \left[\frac{1}{1-x^{\tfrac{\log{(n+1)}}{n}}}-1\right] \right) 
 \\&=
 \frac{1}{x}+\frac{1}{x}\cdot \left(-\frac{(n+1)/n}{1-x^{\tfrac{(n+1)\log{(n+1)}}{n}}}  + \frac{1/n}{1-x^{\tfrac{\log{(n+1)}}{n}}} \right) 
 \\&=
 \frac{1}{x}+\frac{1}{x}\cdot \frac{-1+\tfrac{n+1}{n}\cdot x^{\beta_n}-\tfrac{1}{n}\cdot x^{\alpha_n}}{(1-x^{\alpha_n})(1-x^{\beta_n})},
 \end{align*}
 where 
\begin{align*}
\alpha_n := \tfrac{(n+1)\log{(n+1)}}{n},\quad \text{and \quad }\beta_n := \tfrac{\log{(n+1)}}{n}.
\end{align*}
We now consider the function
\begin{align*}
&H_{n, m}(x):=\\
&x\cdot (\partial_x \log F_n(x) - \partial_x \log F_m(x))\cdot (1-x^{\alpha_n})(1-x^{\beta_n})(1-x^{\alpha_m})(1-x^{\beta_m}),
\end{align*}
which can be written as a {\em pseudo-polynomial}
\begin{align*}
&H_{n, m}(x)=(1-x^{\alpha_m})(1-x^{\beta_m})\left(-1+\tfrac{n+1}{n}\cdot x^{\beta_n}-\tfrac{1}{n}\cdot x^{\alpha_n}\right)
-\\
&(1-x^{\alpha_n})(1-x^{\beta_n})
\left(-1+\tfrac{m+1}{m}\cdot x^{\beta_m}-\tfrac{1}{m}\cdot x^{\alpha_m}\right).
\end{align*}




Expanding $H_{n, m}(x)$ as a pseudo-polynomial in the increasing order of exponents, we get 
\begin{align*}
    H_{n, m}(x) &= \tfrac{1}{n}x^{\beta_n} -\tfrac{1}{m} x^{\beta_m} + \left(\tfrac{m+1}{m}-\tfrac{n+1}{n}\right)
    x^{\beta_m+\beta_n} + \tfrac{m+1}{m} x^{\alpha_m} - \tfrac{n+1}{n}  x^{\alpha_n}\\
    &-\left(\tfrac{n+1}{n}+\tfrac{1}{m}\right)
x^{\beta_n+\alpha_m}- x^{\alpha_m+\beta_m}
+
x^{\alpha_n+\beta_n}
+\left(\tfrac{1}{n} + \tfrac{m+1}{m}\right)
x^{\beta_m+\alpha_n} \\
&+\tfrac{n+1}{n}
x^{\alpha_m+\beta_m+\beta_n}-\tfrac{m+1}{m}
x^{\alpha_n+\beta_n+\beta_m}
-\left(\tfrac{1}{m}-\tfrac{1}{n}\right)
x^{\alpha_m+\alpha_n} \\
&+\tfrac{1}{m}
x^{\alpha_n+\beta_n+\alpha_m}-\tfrac{1}{n}
x^{\alpha_m+\beta_m+\alpha_n}.
\end{align*}

To see that exponents are increasing, note that all exponents split into pairs with sum $\alpha_m+\beta_m+\alpha_n+\beta_n$, so it suffices to show that 
\[
\beta_n < \beta_m<\beta_m+\beta_n< \alpha_m< \alpha_n<\beta_n+\alpha_m<\alpha_m+\beta_m<\alpha_n+\beta_n.
\]
Inequalities 
\[
\beta_n < \beta_m<\beta_m+\beta_n< \alpha_m< \beta_n+\alpha_m<\alpha_m+\beta_m<\alpha_n+\beta_n,
\]
are satisfied for any $n>m$, as is the trivial inequality $\alpha_n > \alpha_m$. To verify that $\alpha_n < \beta_n+\alpha_m$ we recall our assumption that $n=m+1$ and so 
\[
\alpha_m = \tfrac{(m+1)\log{(m+1)}}{m} > \log{(m+2)} = \log{(n+1)} = \alpha_n-\beta_n,
\]
which implies the desired inequality. Here, the inequality in the middle is equivalent to $(m+2)^m < (m+1)^{m+1}$ which is true for $m\geq 1$.

So the signs of the monomials are $+-++---+++--+-$, making it 7 sign changes. Hence, by the Descartes' rule of signs for pseudo-polynomials (see example \#77 on page 46 in  \cite{Polya}) we conclude that $H_{n, m}$ has at most 7 positive roots counted with multiplicities. Since the product $(1-x^{\alpha_n})(1-x^{\beta_n})(1-x^{\alpha_m})(1-x^{\beta_m})$ already has root 1 of multiplicity 4, we conclude that 
\[
\partial_x\log F_n(x) - \partial_x\log F_m(x) = \partial_x(\log F_n(x) - \log F_m(x))
\]
must have at most 3 positive roots counted with multiplicities. 

Since $F_n(0)=F_m(0)$ and $F_n(1)=F_m(1)$, and, by our assumption, $F_n(t)=F_m(t)$ for some $t\in (0, 1)$, we conclude by Rolle's theorem that $\partial_x(\log F_n(x) - \log F_m(x))$ has a root in each of $(0, t)$ and $(t, 1)$. By the functional equation $F_n(1/x)=F_n(x)/x$ we have $F_n(1/t)=F_m(1/t)$ and so $\partial_x(\log F_n(x) - \log F_m(x))$ must also have a root in $(1, 1/t)$, again by Rolle's theorem. Hence, together with $x=1$ the function $\partial_x(\log F_n(x) - \log F_m(x))$ has at least 4 positive roots leading to a contradiction. 
\end{proof}



\subsection{The proof of Theorem~\ref{mth02}}
Using Remark~\ref{genrem1} it suffices to verify the following six-point inequality. 
\begin{proposition}
    \label{prop: prek lein m2}
    For all $x_0, x_1, x_2 \ge 0$ and $y_0, y_1, y_2 \ge 0$, it holds that 
    \begin{equation}
    \label{ineq: main m2}
        \sum_{k = 0}^4 \max_{i + j = k} x_i^py_j^p \ge (x_0 + x_1 + x_2)^p(y_0 + y_1 + y_2)^p\,.
    \end{equation}
\end{proposition}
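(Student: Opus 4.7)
My plan is to verify the six-point inequality~(\ref{ineq: main m2}) by reducing it to a short list of explicit one-parameter inequalities, in the spirit of the proof of Theorem~\ref{mth01}.

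First, Lemma~\ref{perestanovka} allows me to assume $x_0 \geq x_1 \geq x_2 \geq 0$ and $y_0 \geq y_1 \geq y_2 \geq 0$, and homogeneity reduces the problem to the case $x_0 + x_1 + x_2 = y_0 + y_1 + y_2 = 1$, so the right-hand side equals $1$. Under the monotone ordering, $a_0 := \max_{i+j=0} x_iy_j = x_0 y_0$ and $a_4 = x_2 y_2$ are forced, while $a_1 = \max(x_0y_1, x_1y_0)$, $a_3 = \max(x_1y_2, x_2y_1)$, and $a_2 = \max(x_0y_2, x_1y_1, x_2y_0)$ each require a case split. The inequality is invariant under the swap $(x) \leftrightarrow (y)$ and under the simultaneous flip $(x_i, y_j) \mapsto (x_{2-i}, y_{2-j})$ (which maps $a_k$ to $a_{4-k}$); using these symmetries I would cut the $2\cdot 3\cdot 2 = 12$ possible argmax configurations down to a short list.

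In each remaining case the left-hand side becomes an explicit sum of five monomials in the $x_i,y_j$ raised to the power $p$. The conjectured extremum is the constant configuration $x_i = y_j = 1/3$, at which the left-hand side equals $5 \cdot 9^{-p} = 1$; this identity also fixes the sharp value $p = \log 5 / (2 \log 3)$. I would then show LHS $\geq 1$ in each case by a variational argument: on the interior, the Lagrange critical-point equations should force the symmetric point to be the unique critical configuration; on the boundary (some variable zero, or two candidates in a max equal) the problem degenerates either to a sub-case of Theorem~\ref{mth01} or to a one-parameter inequality analogous to Lemma~\ref{Thm 1 key lemma 2}.

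The hard part is that the inequality is saturated at the symmetric point, so soft convexity-type arguments will not suffice. I expect the one-parameter boundary inequalities produced by the case analysis to require a pseudo-polynomial, Descartes'-rule-of-signs argument in the style of Lemma~\ref{lem:main}, matching $p$-th powers of products of the form $\prod (1+w^c)^s$ against sums $\sum w^i$ and carefully counting sign changes to close the estimate. A secondary delicate point will be ensuring that the various cases glue together — i.e., that the extremum is the symmetric configuration in each branch — which may require a careful matching along the co-dimension-one loci where the argmaxes switch.
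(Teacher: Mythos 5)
Your overall strategy (normalize, order the variables via Lemma~\ref{perestanovka}, locate minimizers, reduce to low-dimensional inequalities settled by Descartes' rule of signs) points in the right direction, and you correctly identify the extremal configuration $x_i=y_j=1/3$ and the fact that soft convexity arguments cannot close a saturated inequality. But the central reduction step --- getting from four free variables down to one or two --- is exactly where your plan has a genuine gap. Splitting into argmax branches and running Lagrange multipliers inside each branch does not work as described: the minimizer of $J(x,y)=\sum_{k}\max_{i+j=k}x_i^py_j^p$ generically sits on the non-smooth locus where two candidates for some maximum tie, so it need not be a critical point of any single branch; and within a fixed branch each summand $x_i^py_j^p$ has indefinite Hessian in $(x_i,y_j)$ (since $2p>1$), so the branch functions are neither convex nor concave and the assertion that ``the Lagrange equations force the symmetric point to be the unique critical configuration'' is unsubstantiated. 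You acknowledge the gluing problem along the switching loci but offer no mechanism to resolve it.

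The missing idea is the paper's Lemma~\ref{lem: loc conc}: perturb $x$ multiplicatively, $x_n(t)=(1+t)x_n$ for $n\ne i$ and $x_i(t)=x_i-t(1-x_i)$, which preserves the normalization $\sum_n x_n=1$. If every maximum at $t=0$ is attained uniquely, each term $\max_{i'+j'=k}x_{i'}(t)^py_{j'}^p$ is locally either $(1+t)^p$ times a constant or $x_i(t)^p$ times a constant, both concave in $t$; hence $J(x(t),y)$ is concave near $t=0$ and $(x,y)$ cannot be a minimizer unless some tie $x_iy_{k-i}=x_{i'}y_{k-i'}$ holds or $x_i\in\{0,1\}$. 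Enumerating the possible systems of ties (six pairs, most of which force $x=y$) reduces the problem not to one-parameter inequalities but to three genuinely two-variable inequalities (Lemma~\ref{lem: 2var}) together with four one-variable ones (Lemma~\ref{lem: 1var}); only at that stage do the logarithmic-derivative and sign-change arguments you envisage take over, and that is the part of your plan that does match the paper. Without a substitute for this concavity-under-perturbation lemma, your case analysis cannot be completed.
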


We now prove Proposition \ref{prop: prek lein m2}. We proceed by successively reducing the number of variables, deducing the proposition from the following two lemmas.

\begin{lemma}
    \label{lem: 1var}
    The following inequalities hold for all $x \ge 0$
    \begin{equation}
        \label{ineq: m0}
        1 + x^p \ge (1 + x)^p
    \end{equation}
    \begin{equation}
        \label{ineq: m1}
        1 + x^p + x^{2p} \ge (1 + x)^{2p}
    \end{equation}
    \begin{equation}
        \label{ineq: m2}
        1 + x^p + x^{2p} + x^{3p} + x^{4p} \ge (1 + x + x^2)^{2p}
    \end{equation}
    \begin{equation}
        \label{ineq: m2b}
        1 + x^p + x^{2p} + x^{3p} \ge (1 + x)^p(1 + x + x^2)^p\,.
    \end{equation}
\end{lemma}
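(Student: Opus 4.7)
The plan is to tackle the four inequalities in the order \eqref{ineq: m0}, \eqref{ineq: m2}, \eqref{ineq: m1}, \eqref{ineq: m2b}, so that the last one can be deduced from the preceding ones. Inequality \eqref{ineq: m0} is immediate from the subadditivity of $t \mapsto t^p$ on $[0, \infty)$: since $p \in (0, 1)$ this function is concave with $0^p = 0$, so $(a+b)^p \leq a^p + b^p$. Applying this with $a=1$, $b=x$ gives the claim.

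Inequality \eqref{ineq: m2} I would deduce directly from Lemma \ref{lem:main} with $(n,m) = (4,2)$. That lemma gives $F_4(x) \geq F_2(x)$, that is,
\[
P_4\bigl(x^{(\log 5)/4}\bigr)^{1/\log 5} \geq P_2\bigl(x^{(\log 3)/2}\bigr)^{1/\log 3}.
\]
Substituting $y = x^{(\log 3)/2}$ (a bijection of $[0,\infty)$) one has $x^{(\log 5)/4} = y^{\log 5/(2\log 3)} = y^p$; raising both sides to the power $\log 5$ turns the displayed inequality into $P_4(y^p) \geq P_2(y)^{(\log 5)/(\log 3)} = P_2(y)^{2p}$, which is precisely \eqref{ineq: m2}.

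Inequality \eqref{ineq: m1} is the main technical step and uses the specific value of $p$. The plan is to introduce $\Phi(x) := 1 + x^p + x^{2p} - (1+x)^{2p}$ and note the functional equation $\Phi(x) = x^{2p}\Phi(1/x)$, which reduces the claim to proving $\Phi \geq 0$ on $[0, 1]$. Since $\Phi(0) = 0$, it is enough to show that $\Phi$ is non-decreasing on $[0, 1]$, which is equivalent to
\[
x^{p-1}(1 + 2x^p) \geq 2(1+x)^{2p-1}, \qquad x \in (0, 1].
\]
Setting $y = x^p$ and multiplying by appropriate positive powers of $y$ and of $(1+y^{1/p})$ converts the difference of the two sides into a pseudo-polynomial in $y$ that has $y = 1$ as a forced root (reflecting $\Phi'(1) = p(3 - 2^{2p})$, which is positive precisely because $(\log 3)^2 > (\log 2)(\log 5)$). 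One then counts sign changes in the exponent ordering and applies Descartes' rule of signs for pseudo-polynomials, exactly in the spirit of Lemma \ref{lem:main}, to conclude that the pseudo-polynomial cannot change sign on $(0, 1]$.

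Inequality \eqref{ineq: m2b} follows by combining \eqref{ineq: m1} and \eqref{ineq: m2} with AM--GM after a symmetry reduction. Multiplying both sides of \eqref{ineq: m2b} by $x^{3p}$ shows it is invariant under $x \mapsto 1/x$, so one may assume $x \in [0, 1]$. Then
\begin{align*}
(1+x)^p(1+x+x^2)^p &\leq \tfrac{1}{2}\bigl[(1+x)^{2p} + (1+x+x^2)^{2p}\bigr] \\
&\leq \tfrac{1}{2}\bigl[(1 + x^p + x^{2p}) + (1 + x^p + x^{2p} + x^{3p} + x^{4p})\bigr] \\
&= 1 + x^p + x^{2p} + \tfrac{x^{3p} + x^{4p}}{2} \leq 1 + x^p + x^{2p} + x^{3p},
\end{align*}
where the first line is AM--GM, the second uses \eqref{ineq: m1} and \eqref{ineq: m2}, and the last inequality uses $x^{4p} \leq x^{3p}$ on $[0, 1]$. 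The hard part is clearly \eqref{ineq: m1}: the other three inequalities are either trivial or follow from results already established, while \eqref{ineq: m1} does not fit the pattern of Lemma \ref{lem:main} and requires its own Descartes-type analysis.
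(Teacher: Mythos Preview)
Your treatment of \eqref{ineq: m0} and \eqref{ineq: m2} is correct and matches the paper's intent (the paper cites Lemma~\ref{Thm 1 key lemma 2}, but for \eqref{ineq: m2} one really needs the generalisation Lemma~\ref{lem:main} with $(n,m)=(4,2)$, exactly as you write). Your argument for \eqref{ineq: m2b} via the $x\mapsto 1/x$ symmetry and AM--GM is correct and considerably simpler than the paper's: there the interval $[0,1]$ is split into $[0,0.1]$ and $[0.1,1]$, a sharper inequality $1+x^{p}\ge (1+x)^{p}(1+x+x^{2})^{p}$ is established near $0$, and on $[0.1,1]$ the left side is bounded below by $(1+x^{3p/2})^{2}$ via Lemma~\ref{Thm 1 key lemma 2}; each piece then requires its own Descartes sign count together with a numerical check at $x=0.1$. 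Your two-line route bypasses all of that.

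There is, however, a genuine gap in your handling of \eqref{ineq: m1}. You assert that after the substitution $y=x^{p}$ and multiplication by suitable powers of $y$ and of $(1+y^{1/p})$, the quantity $x^{p-1}(1+2x^{p})-2(1+x)^{2p-1}$ becomes a pseudo-polynomial in $y$. It does not: the factor $(1+x)^{2p-1}$ carries the non-integer exponent $2p-1$, and no multiplication by powers of $(1+x)$ will turn it into a finite sum of real powers of $x$ (equivalently of $y$). A Descartes-type argument here would have to go through a logarithmic derivative, in the style of the proof of Lemma~\ref{lem:main}, which you do not set up.

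In fact no separate analysis is needed: \eqref{ineq: m1} does follow from Lemma~\ref{Thm 1 key lemma 2}, as the paper asserts. By the symmetry $\Phi(x)=x^{2p}\Phi(1/x)$ you already noted, it suffices to treat $x\in[0,1]$. Lemma~\ref{Thm 1 key lemma 2} with $n=2$ and $w=x^{p}$ gives
\[
1+x^{p}+x^{2p}\ \ge\ \bigl(1+x^{\,2p/\log_{2}3}\bigr)^{\log_{2}3}.
\]
The key numerical fact is $2p<\log_{2}3$, equivalently $(\log 3)^{2}>(\log 2)(\log 5)$ --- precisely the inequality you invoked when computing $\Phi'(1)>0$. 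Writing $s:=2p/\log_{2}3<1$ we have $x^{s}\ge x$ on $[0,1]$, and since also $\log_{2}3>2p$,
\[
\bigl(1+x^{s}\bigr)^{\log_{2}3}\ \ge\ (1+x)^{\log_{2}3}\ \ge\ (1+x)^{2p}.
\]
So \eqref{ineq: m1} is not the hard part after all.
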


\begin{proof}
    Inequality \eqref{ineq: m0} holds since $p < 1$. Inequalities \eqref{ineq: m1} and \eqref{ineq: m2} follow from Lemma~\ref{Thm 1 key lemma 2}. We now prove the inequality (\ref{ineq: m2b}). Near $0$, the sharper inequality
    \begin{equation}
        \label{ineq: m2b sharper}
        1 + x^p \ge (1 + x)^p(1 + x + x^2)^p
    \end{equation}
    holds. Indeed, taking logarithmic derivatives of both sides and taking the difference yields
    $$
        \frac{1}{p} \partial_x \log \left( \frac{1+x^p}{(1 + x)^p(1 + x + x^2)^p}\right)=\frac{x^{p-1}}{1  +x^{p}} - \frac{1}{1+x} - \frac{1+2x}{1+x+x^2}\,.
    $$
    The sign of the logarithmic derivative coincides (after clearing all the denominators) with the sign of 
    $$
        x^{p-1} - 2- 4x - 2x^{1+p} - 3x^2 - 2x^{2+p}\,.
    $$
    This clearly changes sign exactly once, from positive to negative, and hence \eqref{ineq: m2b sharper} holds on an interval if it holds at the endpoints. By verifying the inequality at $x=0.1$ we find that it holds in particular on $[0,0.1]$. 
    
    It remains to show \eqref{ineq: m2b} on $[0.1,1]$. 
    From Lemma~\ref{Thm 1 key lemma 2}, we have that 
    $$
        1 + x^p + x^{2p} + x^{3p} \ge (1 + x^{3p/2})^2\,,
    $$
    so it is enough to show that for $x \in [0.1,1]$
    \begin{equation}
        \label{ineq: bound 01}
        (1 + x^{3p/2})^2 \ge (1 + x)^p(1+x+x^2)^p\,.
    \end{equation}
    Again, we compute logarithmic derivatives to obtain
    $$
        \frac{1}{p}\partial_x\log \left( \frac{(1+x^{3p/2})^2}{(1 + x)^p(1 + x + x^2)^p}\right) = 3 \frac{x^{3p/2 - 1}}{1 + x^{3p/2}} - \frac{1}{1+x} - \frac{1+2x}{1+x+x^2}\,.
    $$
    After clearing denominators, we see that the sign of the logarithmic derivative coincides with the sign of
    \begin{equation}
        \label{eq log der}
        -2 + 3x^{3p/2-1} - 4x + 4x^{3p/2}-3x^2 + 2x^{3p/2 +1}=:P(x)\,.
    \end{equation}
    $P$ has at most $5$ zeros on $(0,\infty)$, since this is the number of sign changes in the coefficients. Furthermore $P(x) = -x^{3p/2+1}P(1/x)$, so $P(1) = 0$ and the number of zeros of $P$ in $(0,1)$ is the same as the number of zeros in $(1,\infty)$. Clearly $P$ is negative near $0$, and one computes that $P'(1) < 0$. Thus, if there is more than one zero in $(0, 1)$, then there are at least $3$, so at least $7$ in total, which is impossible. So inequality \eqref{ineq: bound 01} holds if it holds in $0.1$ and in $1$, which it does.
\end{proof}

\begin{lemma}
    \label{lem: 2var}
    The following three inequalities hold for all $0 \le x \le y$:
    \begin{equation}
        \label{ineq: xeqy1}
        1 + y^p + y^{2p} + x^p y^{2p} + x^{2p}y^{2p} \ge (1 + y + xy)^{2p}
    \end{equation}
    \begin{equation}
        \label{ineq: xeqy2}
        1 + x^p + x^{p}y^p + x^{2p}y^p + x^{2p}y^{2p} \ge (1 + x + xy)^{2p}
    \end{equation}
    \begin{equation}
        \label{ineq: xneqy}
        1 + y^p + x^p y^p + x^{p}y^{2p} + x^{2p}y^{2p} \ge (1 + x + xy)^p(1 + y + xy)^p
    \end{equation}
\end{lemma}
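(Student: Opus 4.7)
The plan is to follow the template of the proof of \eqref{ineq: m2b} from Lemma~\ref{lem: 1var}: each of the three two-variable inequalities degenerates on the boundary of $\{0 \leq x \leq y\}$ to a one-variable inequality already proved in Lemma~\ref{lem: 1var}, and the task is to propagate positivity into the interior.

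First I would record the boundary reductions. For \eqref{ineq: xeqy1}, the case $x=0$ reduces to \eqref{ineq: m1} and $x=y$ reduces to \eqref{ineq: m2}; for \eqref{ineq: xeqy2}, $x=0$ is trivial ($1\geq 1$) and $x=y$ yields \eqref{ineq: m2}; and for \eqref{ineq: xneqy}, $x=0$ gives \eqref{ineq: m0} while $x=y$ gives \eqref{ineq: m2}. For the interior of each inequality I would parametrize by $t=x/y\in[0,1]$, substitute $x=ty$, and view each inequality as a one-variable statement $\Phi_t(y)\geq 0$. Taking a logarithmic derivative in $y$ and clearing denominators, in parallel with the proof of \eqref{ineq: m2b}, produces a pseudo-polynomial in $y$ whose number of positive roots I bound via Descartes' rule of signs; combined with the boundary values $\Phi_t(0),\Phi_t(1),\Phi_t(\infty)$ (using the $y\mapsto 1/y$ scaling symmetry in the spirit of Lemma~\ref{lem:increasing-ratio}) and Rolle's theorem, this should force $\Phi_t\geq 0$ throughout $(0,\infty)$.

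A possibly cleaner route for \eqref{ineq: xneqy} is to deduce it from \eqref{ineq: xeqy1} and \eqref{ineq: xeqy2}. Denote the left-hand sides of \eqref{ineq: xeqy1}, \eqref{ineq: xeqy2}, \eqref{ineq: xneqy} by $A_1, A_2, B$ respectively. The right-hand side of \eqref{ineq: xneqy} is the geometric mean of the right-hand sides of the first two, so those two inequalities together give $\sqrt{A_1 A_2}\geq (1+x+xy)^p(1+y+xy)^p$, and it suffices to establish the purely algebraic inequality $B^2\geq A_1 A_2$ whenever $0\leq x\leq y$. Setting $X=x^p, Y=y^p$ and $W=(1+X)(1+XY)$ one has $B=1+YW$ and $A_2=W+X^2Y^2$, and at $X=Y$ a direct computation shows $B=A_1=A_2=1+X+X^2+X^3+X^4$; hence $B^2-A_1A_2$ vanishes on the diagonal and factors as $(Y-X)\,Q(X,Y)$ for some polynomial $Q$, whose diagonal value $Q(X,X) = B\cdot(1-X+X^2)$ is strictly positive for $X>0$.

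The main obstacle is to prove that $Q(X,Y)\geq 0$ on all of $X, Y\geq 0$ (this is needed not only for $X\leq Y$ but also for $X>Y$, since numerically $B^2<A_1A_2$ there and the sign of $Q$ must cancel that of $Y-X$). I would first attempt a sum-of-squares or Schur-like decomposition of $Q$; should that fail, I would fall back on the Descartes-plus-Rolle analysis of the previous paragraph applied to $Q$, in which the principal bookkeeping challenge is that $Q$'s coefficients depend on a secondary parameter (either $X/Y$ or $X$), possibly forcing a case split into subranges of that parameter of the same flavor as the numerical split $[0,0.1]$ versus $[0.1,1]$ appearing in the proof of \eqref{ineq: m2b}.
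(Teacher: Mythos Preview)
Your plan has a structural gap. You list the boundary reductions at $x=0$ and $x=y$, but then parametrize by $t=x/y$ and differentiate in $y$. For fixed $t\in(0,1)$, the endpoints of the $y$-interval are $y=0$ and $y=\infty$, and neither of these is where Lemma~\ref{lem: 1var} applies: at $y=0$ both sides collapse to $1$, and as $y\to\infty$ the ratio of the two sides tends to $1$ (both sides are $\sim t^{2p}y^{4p}$ in \eqref{ineq: xeqy1}, for example). The cases $x=0$ and $x=y$ are the boundaries in the $t$-direction, not the $y$-direction, so your ``boundary reductions'' never get used. In addition, the $y\mapsto 1/y$ symmetry you invoke does not exist for generic $t$: with $x=ty$ the left-hand side of \eqref{ineq: xeqy1} is $1+y^p+y^{2p}+t^py^{3p}+t^{2p}y^{4p}$, which is palindromic in $y^p$ only when $t=1$. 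Without that symmetry and without usable endpoint values, the Rolle/Descartes count cannot be closed as you describe.

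The paper avoids this by choosing the fixed parameter so that the endpoints of the remaining one-dimensional variation \emph{are} precisely $x=0$ and $x=y$. For \eqref{ineq: xeqy2} and \eqref{ineq: xneqy} it sets $z=xy$ and varies the other variable; the constraint $0\le x\le y$ becomes an interval whose endpoints correspond to $x=0$ and $x=y$, where Lemma~\ref{lem: 1var} applies, and the logarithmic derivative (after clearing denominators) is a pseudo-polynomial in the free variable with a \emph{single} sign change, so Descartes gives at most one zero and the argument closes. For \eqref{ineq: xeqy1} the paper instead fixes $h=1+y+xy$ (so the right-hand side is constant) and shows the left-hand side is minimized at one of the two boundary configurations by checking that its $y$-derivative changes sign at most once; this uses a short monotonicity lemma for $f(x)=x^p-x^{p-1}-2x^{2p-1}$ rather than Descartes.

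Your alternative route for \eqref{ineq: xneqy} via $B^2\ge A_1A_2$ is a nice observation (the factorization $B^2-A_1A_2=(Y-X)Q(X,Y)$ is correct and $Q$ does appear to be nonnegative), but as you acknowledge it is left unproven; it is not the route the paper takes, and turning it into a proof would still require a nontrivial positivity argument for $Q$.
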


\begin{proof}
    \noindent\textbf{Proof of inequality \eqref{ineq: xeqy1}:} 
    We note that the function  $f(x) := x^p - x^{p - 1} - 2x^{2p - 1}$ is increasing on $[0, \infty)$: Its derivative 
    $$
        \partial_x f(x) = p x^{p - 1} + (1 - p) x^{p-2} - 2 (2p - 1) x^{2p - 2}
    $$
    is positive by the weighted AM-GM inequality
    $$
        p x + (1- p) \ge x^p  > 2(2p - 1) x^p\,,
    $$
    where we use that $1/2 < p < 3/4$. 
    
    Now we consider $h = 1 + y + xy$ to be fixed, and show that under this constraint the left hand side of \eqref{ineq: xeqy1} is minimized at $x = y$ or $x =0$. To show this, it suffices to prove that its derivative with respect to $y$ changes sign at most once from plus to minus.  At the two endpoints $x = y$ and $x = 0$ the inequality holds by Lemma \ref{lem: 1var}, so this proves the inequality.

    We have $\partial_y x = -\frac{1 + x}{y}$, from which we obtain:
    \begin{equation}
        \frac{y^{1 - 2p}}{p} \partial_y(1 + y^p + y^{2p} + x^p y^{2p} + x^{2p}y^{2p})= y^{-p} + 2 + x^p  -x^{p-1} - 2x^{2p-1}\,.
    \end{equation}
    Thus the derivative has the same sign as $y^{-p} + 2 + f(x) = y^{-p} + 2 + f(\frac{h-1}{y+1})$, which is decreasing in $y$.
    \medskip

    \noindent\textbf{Proof of inequality \eqref{ineq: xeqy2}:} 
    We set $z = xy$. Then we have to show that for $0 \le x^2 \le z$, we have 
    $$
        1 + z^p + z^{2p} + (1 + z^p)x^p \ge (1 + x + z)^{2p}\,.
    $$
    At the endpoints $x = 0$ and $x^2 = z$, this holds by Lemma \ref{lem: 1var}. Near $0$, the difference of left hand side and right hand side has derivative $\sim p(1 + z^p)x^{p-1}$, which is positive. So it is enough to show that the logarithmic derivative of the ratio of left hand side and right hand side vanishes at most once. It is 
    $$
        \frac{p(1+z^p)x^{p-1}}{1 + z^p + z^{2p} + (1 + z^p)x^p} - 2p \frac{1}{1 + x + z}\,.
    $$
    Multiplying through, it suffices to show that 
    \begin{align*}
        &\quad p(1+z^p)x^{p-1}(1 + z + x) - 2p(1 + z^p + z^{2p} + (1+z^p)x^p)\\
        &= x^{p-1}p(1+z^p)(1+z)) -2p(1 + z^p + z^{2p}) - x^pp(1+z^p) 
    \end{align*}
    has at most one zero. This follows from Descarte's rule of signs for pseudo-polynomials, since there is only one sign change in the coefficients.  \medskip

    \noindent \textbf{Proof of inequality \eqref{ineq: xneqy}:} We set again $z = xy$, so that we have to show
    $$
        1 + z^p + z^{2p} + y^p(1 + z^p)   \ge (1 + \frac{z}{y} + z)^p(1 + y + z)^p\,.
    $$
    The restriction $x \le y$ implies $0 \le z \le y^2$. At the endpoints the inequality holds, by Lemma \ref{lem: 1var}. As in the previous case, it is enough to check that the logarithmic derivative in $y$ of the ratio of left and right hand side vanishes at most once. It is 
    \begin{align*}
        \frac{p(1 + z^p)y^{p-1}}{1 + z^p + z^{2p} + y^p(1 + z^p)} + \frac{pzy^{-1}}{z + (1+z)y} - \frac{p}{1 + y + z}\,.
    \end{align*}
    Clearing all denominators, we obtain 
    \begin{align*}
        &\quad y^{-1} \cdot pz(1+z)(1 + z^p + z^{2p})\\
        &+ y^{p-1}\cdot (p(1+z^p)z(1+z) + p(1+z^p)z(1+z))\\
        &+ y^{p} \cdot (p(1+z^p)(1+z)^2 + p(1+z^p)z + pz(1+z^p) - pz(1+z^p))\\
        &+ y \cdot (-p(1+z)(1+z^p + z^{2p}))\,.
    \end{align*}
    This is pseudo-polynomial in $y$ with sign sequence of its coefficients $+++-$, so it has at most one zero. This completes the proof.
\end{proof}

Now we start the proof of Proposition \ref{prop: prek lein m2}.

\begin{proof}[Proof of Proposition \ref{prop: prek lein m2}]

The inequality \eqref{ineq: main m2} is homogeneous in $x$ and $y$, so we can assume that $x_0 + x_1 + x_2 = y_0 + y_1 + y_2 = 1$. We denote 
$$
    \mathbf{P} = \{x \in [0,1]^2 \, : \, \sum_{i = 0}^2 x_i = 1\}\,.
$$
\begin{lemma}
\label{lem: loc conc}
Suppose that $(x,y) \in \mathbf{P}^2$ minimizes 
$$
    J(x,y) := \sum_{k = 0}^4 \max_{i + j=k} x_i^p y_j^p
$$
on $\mathbf{P}^2$.
Then for each $i$, we have $x_i = 0$ or $x_i = 1$ or there exists $k$ and $i'$ such that $x_i y_{k - i} = x_{i'} y_{k - i'}$. Similarly, for each $j$ we have $y_j = 0$, $y_j = 1$ or there exists $k$ and $j'$ such that $x_{k-j}y_j = x_{k-j'}y_{j'}$.
\end{lemma}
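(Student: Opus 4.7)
The plan is to exploit piecewise concavity of $J(\cdot, y)$ on the simplex $\mathbf{P}$. Partition $\mathbf{P}$ according to which pair $(i_k, k-i_k)$ realizes the maximum in $V_k := \arg\max_{i'+j'=k} x_{i'}^p y_{j'}^p$. On each such piece
\[
J(x,y) = \sum_{i=0}^{2} A_i\, x_i^p, \qquad A_i := \sum_{k\,:\, i_k = i} y_{k-i}^p,
\]
a sum of strictly concave functions of $x$ since $p \in (0,1)$; neighbouring pieces agree on the tie surfaces separating them. This piecewise description is the main analytic ingredient.

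Given this, I would argue by contradiction. Fix $i$ with $x_i \in (0,1)$ and assume no equality $x_i y_{k-i} = x_{i'} y_{k-i'}$ with $i' \ne i$ holds. Then no tie in any active set $V_k$ involves the index $i$: any such tie would yield precisely the forbidden equality. First I would dispose of the case that no tie occurs at all at $(x,y)$. Then $(x,y)$ sits in the (relative) interior of a piece of $\mathbf{P}$ on which $J(\cdot, y)$ is strictly concave, so some direction of motion strictly decreases $J$ unless $(x,y)$ is on the simplex boundary. Iterating on faces of decreasing dimension, one reduces to the case that $(x,y)$ is a simplex vertex, but that would force $x_i \in \{0,1\}$ — contradicting $x_i \in (0,1)$.

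In the remaining case some tie occurs, and by the hypothesis it is between two pairs both indexed by $\{0,1,2\} \setminus \{i\} =: \{i', i''\}$. A nontrivial such tie fixes the ratio $x_{i'}/x_{i''} = y_{k-i''}/y_{k-i'}$, and any two simultaneous ties with different ratios would have their common point only at the vertex $e_i$, forcing $x_i = 1$ — impossible. Hence all active ties lie on a single line segment $T \subset \mathbf{P}$ parameterised by $t = x_i \in [0,1]$ with $x_{i'} = c_1(1-t)$, $x_{i''} = c_2(1-t)$ for fixed constants $c_1, c_2 \geq 0$. The main step (short but crucial) is to check that the active sets are locally constant along $T$ at $x$: the ratio $x_{i'}/x_{i''}$ is preserved along $T$ so the present ties persist, and all sole maxima persist by continuity of $x_{i'}^p y_{j'}^p$.

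Thus in a neighbourhood of $x$ on $T$ one has
\[
J(x(t), y) = \alpha\, t^p + \beta\, (1-t)^p, \qquad \alpha, \beta \geq 0,
\]
and $\alpha + \beta > 0$, for otherwise $J(x, y) = 0$, which is impossible since $\sum y_j = 1$ and $\sum x_i = 1$. Since $p \in (0,1)$ this function is strictly concave on $[0,1]$, so its minimum is attained only at $t = 0$ or $t = 1$; as $t = x_i \in (0,1)$ we get $J(x, y) > \min\{J(x(0), y), J(x(1), y)\}$, contradicting the minimality of $(x,y)$. The claim for $y$ follows by the symmetric argument with the roles of $x$ and $y$ interchanged. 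The main obstacle is the bookkeeping showing that the simultaneous ties collapse to a single 1-parameter family $T$; once that is in place, the strict concavity of $\alpha t^p + \beta(1-t)^p$ closes the argument cleanly.
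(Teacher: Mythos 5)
Your argument is correct and, at its core, uses the same perturbation as the paper: the line $T$ you construct, with $x_i = t$ and $x_{i'},x_{i''}$ scaled proportionally so as to stay in $\mathbf{P}$, is exactly the paper's one-parameter deformation $x_n(t)=(1+t)x_n$ for $n\ne i$, $x_i(t)=x_i-t(1-x_i)$ (just reparameterised by the value of $x_i$). The main difference is one of organisation: the paper applies that single perturbation uniformly, observing that (i) all products $x_{i'}y_{j'}$ with $i'\ne i$ scale by the common factor $(1+t)^p$, so ties among them persist automatically, and (ii) any $k$ where $x_iy_{k-i}$ is the strict unique max continues to be so for small $t$ — hence $J(x(t),y)$ is a sum of two concave terms $\alpha'\,x_i(t)^p + \beta'(1+t)^p$ with $\alpha'+\beta'>0$, and concavity finishes the proof. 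Your no-tie/tie case split, the piecewise decomposition $J=\sum_i A_i x_i^p$, and the face-descent iteration are therefore unnecessary machinery: the no-tie case is already subsumed by the $T$-line argument (it just means $\beta$, or $\alpha$, collects only terms with unique maximisers). One small imprecision: the identity $J(x(t),y)=\alpha t^p+\beta(1-t)^p$ only holds in a neighbourhood of $t=x_i$, so you should not compare directly against the endpoints $t=0,1$; but local strict concavity already shows the interior point cannot be a local (hence global) minimiser, so the conclusion stands.
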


\begin{proof}[Proof of Lemma \ref{lem: loc conc}]
Suppose that $(x,y)$ is a minimizer which does not have the claimed property. By symmetry in $x$ and $y$, we can then assume that there exists $i$ such that $x_i \in (0,1)$ and such that $x_i y_{k-i} \ne x_{i'}y_{k - i'}$ for all $k = i, i+1, i + 2$ and $i \ne i'$. Now we perturb $x$ to decrease $J(x,y)$ as follows: We define 
$$
    x_n(t) = 
    \begin{cases}
        (1 + t)x_n  &\text{if $n\ne i$}\\
        x_i - t(1-x_i) &\text{if $n = i$}
    \end{cases}\,.
$$
Since $x_i \in (0,1)$, there exists $\varepsilon > 0$ such that $x(t) \in \mathbf{P}$ for all $t \in (-\varepsilon, \varepsilon)$. Furthermore, for each $k$ with $x_i y_{k-i} < \max_{i' + j' = k} x_{i'}y_{j'}$, there exists another $\varepsilon > 0$ such that we have 
$$
    \max_{i'+ j' = k} x_{i'}(t)^py_{j'} = (1+t)^p \max_{i'+ j' = k} x_{i'}^py_{j'}
$$
for all $t \in (-\varepsilon, \varepsilon)$. Indeed, each product $x_{i'} y_{j'}$ with $i' \ne i$ changes by the same factor, and the product $x_i y_{k-i}$ is strictly smaller. Finally, if $x_i y_{k-i} = \max_{i' + j' = k} x_{i'}y_{j'}$, then there also exists $\varepsilon > 0$ such that we have 
$$
    x_i(t)^p y_{k-i}^p = \max_{i' + j' = k} x_{i'}(t)^py_{j'}^p
$$
for all $t \in (-\varepsilon, \varepsilon)$, since $x_i y_{k-i}$ is stricly larger than all other products $x_{i'} y_{k-i'}$. 

Taking the minimum of the $\varepsilon$'s, we obtain a neighbourhood $(-\varepsilon, \varepsilon)$ of $0$ on which 
$$
    J(x(t), y) = \sum_{k = 0}^4 \max_{i' + j' = k} x_{i'}(t)^p y_{j'}^p
$$
is a concave function of $t$. Thus $(x,y) = (x(0), y)$ cannot be a minimizer.
\end{proof}

We now pick a minimizer $(x,y)$. We first assume that $x_i, y_j \in (0,1)$ for all $i, j$. By Lemma \ref{lem: loc conc}, at least two equations $x_i y_{k-i} = x_{i'} y_{k - i'}$ must hold, involving each of $x_0, x_1$ and $x_2$. It is easy to check that all such pairs of equations are
\begin{enumerate}
    \item[1)] $x_0y_1 = x_1y_0$ and $x_0y_2 = x_2 y_0$,
    \item[2)] $x_0y_1 = x_1y_0$ and $x_1y_1 = x_2y_0$,
    \item[3)] $x_0y_1 = x_1y_0$ and $x_1y_2 = x_2y_1$,
    \item[4)] $x_0y_2 = x_1y_1 = x_2y_0$,
    \item[5)] $x_0y_2 = x_1y_1$ and $x_1y_2 = x_2y_1$,
    \item[6)] $x_0y_2 = x_2y_0$ and $x_1y_2 = x_2y_1$.
\end{enumerate}
Pairs 1), 3) and 6) imply that $x = y$.
In pair 2) there is no equation involving $y_2$. Adding any of the three possible equations one obtains that $x = y$. Pair 5) is symmetric to pair 2) after reversing the order of the variables, so it also reduces to $x = y$. Writing $x = y = (x_0, x_0a, x_0ab)$ or $x = y = (x_0, x_0b, x_0ba)$ with $0 \le a \le b$, the inequality $J(x,y) \ge 1$ simplifies to \eqref{ineq: xeqy1} and \eqref{ineq: xeqy2}, respectively, so it holds by Lemma \ref{lem: 2var}.

The only remaining case is pair 4). In that case we can write $x = (x_0, ax_0, abx_0)$ and $y = (y_0, b y_0, aby_0)$, and by symmetry we have without loss of generality $0 \le a \le b$. Then the inequality simplifies to \eqref{ineq: xneqy}, so follows again from Lemma \ref{lem: 2var}.
\medskip

Now we consider minimizers $(x,y)$ on the boundary of $\mathbf{P}^2$. If $(x,y)$ is on the boundary, then at least one of the variables is $0$. Since $x, y$ are nondecreasing, this must be $x_0$ or $y_0$. We assume without loss of generality that $y_0 = 0$.

If $y_1 = 0$ or then $y = (0,0,1)$, and the inequality holds. So we can assume that $y_1, y_2 \in (0,1)$. If any of $x_0, x_1, x_2$ equals $1$, the inequality is trivial. If $x_0 = 0$, then the inequality reduces to the case $m = 1$, which is proven in (\ref{npoint}). 
If $x_0 \ne 0$ then $x_0, x_1, x_2 \in (0,1)$ since $x$ is nondecreasing. So by Lemma \ref{lem: loc conc}, one of the $6$ pairs of equations listed above must hold. Pairs 1), 2), 3), 4) and 6) contradict the assumption that $y_0 = 0$ and all other variables are not $0$. If pair 5) holds, then we can write 
$$
    x = (x_0, ax_0, a^2x_0)\,, \quad \quad y = (0, y_1, ay_1)\,.
$$
Then the inequality reduces to \eqref{ineq: m2b} above, so it follows from Lemma \ref{lem: 1var}.
\end{proof}

\subsection{The proof of Theorem~\ref{trisets}}
By Remark~\ref{genrem1} and Lemma 11.4 in \cite{MRSZ} (which is similar to Lemma~\ref{perestanovka}) it suffices to show that for all non-negative finitely supported $g, h :\mathbb{Z} \to [0, \infty)$ and $f(x) = 1_{\{0\}}+\delta 1_{\{1\}}$ with any $\delta \in [0,1]$, we have 
\begin{align*}
    \frac{\|f\bar{*}g\bar{*}h\|_{1}}{\|g\|_{p}\|h\|_{q}}\geq (1+\delta^{c})^{1/c}.
\end{align*}
Theorem 11.1 in \cite{MRSZ} implies that 
\begin{align*}
    \frac{\|f\bar{*}g\bar{*}h\|_{1}}{\|g\|_{p}\|h\|_{q}} \geq c_{\delta}:=\frac{(1-\delta^{p})^{1/p}(1-\delta^{q})^{1/q}}{1-\delta}. 
\end{align*}
The case $\delta=1$ should be understood as $\lim_{\delta \to 1}c_{\delta}=p^{1/p}q^{1/q}$. Thus it suffices prove the following subtle inequality. 

\begin{lemma}
    For any $p, q > 1$ satisfying $1/p+1/q=1$ one has 
    \[
    \frac{(1-x^p)^{1/p}(1-x^q)^{1/q}}{(1-x)(1+x^c)^{1/c}}\geq 1,
    \]
    for any $x\in [0, 1]$, where $c:=1/\log_2{(p^{1/p}q^{1/q})}$ is chosen in such a way that $x\rightarrow 1$ gives an equality. 
\end{lemma}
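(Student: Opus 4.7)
The plan is to take logarithms and show that
\[F(x) := \frac{1}{p}\log(1-x^p) + \frac{1}{q}\log(1-x^q) - \log(1-x) - \frac{1}{c}\log(1+x^c)\]
is nonnegative on $[0,1]$ by proving it is unimodal, via a Descartes-style sign count in the spirit of the proof of Theorem~\ref{mth01}. Using $\frac{1}{p}+\frac{1}{q}=1$ and the defining identity $\frac{\log 2}{c}=\frac{\log p}{p}+\frac{\log q}{q}$, direct expansion of the logarithms at the endpoints gives $F(0)=0$ and $\lim_{x\to 1^-}F(x)=0$. A short Taylor calculation (in which the singular $1/(1-x)$ contributions in $F'$ cancel thanks to $\frac{1}{p}+\frac{1}{q}=1$, the remaining finite parts of the first three terms sum to $\frac{1}{2}$, and this exactly offsets the value $-\frac{1}{2}$ of $-\frac{x^{c-1}}{1+x^c}$ at $x=1$) shows $F'(1^-)=0$. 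Since $F'(0^+)=1>0$, it suffices to prove that $F'$ has a unique (and necessarily simple) zero in $(0,1)$.

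Clearing denominators in $F'(x)=-\frac{x^{p-1}}{1-x^p}-\frac{x^{q-1}}{1-x^q}+\frac{1}{1-x}-\frac{x^{c-1}}{1+x^c}$ by multiplying through by the positive function $x(1-x)(1-x^p)(1-x^q)(1+x^c)$ produces the pseudo-polynomial
\[H(x) = x - x^p - x^q - x^c + 2x^{p+q} + 2x^{c+1} - x^{p+q+1} - x^{c+p+1} - x^{c+q+1} + x^{c+p+q},\]
whose zeros on $(0,1)$ coincide with those of $F'$. The factor $(1-x)(1-x^p)(1-x^q)$ forces a zero of order $3$ at $x=1$, and combined with $F'(1^-)=0$ the multiplicity of $H$ at $x=1$ is at least $4$; similarly $x=0$ is a simple root of $H$ (the degenerate case $p=q=2$, in which $F\equiv 0$, may be excluded).

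The crux is to show that sorting the ten exponents of $H$ in increasing order produces the signed coefficient sequence $(+,-,-,+,-,-,+,-,-,+)$, with exactly $6$ sign changes. Assuming WLOG $p\leq 2\leq q$ (hence $c\geq 1$), the four $(+)$-exponents $\{1,\,c+1,\,p+q,\,c+p+q\}$ and the six $(-)$-exponents $\{p,c,q,c+p+1,c+q+1,p+q+1\}$ interleave in that pattern once one verifies a handful of comparisons --- $p\leq c+1$ (from $c\geq 1\geq p-1$), $1\leq p\leq q$, $1\leq c$, and similar --- together with the one decisive inequality $c+1\leq q$. Setting $t=1/p\in[1/2,1]$, the latter is equivalent to the entropy-type bound $H_2(t)\geq (1-t)/t$ for the binary entropy $H_2(t)=-t\log_2 t-(1-t)\log_2(1-t)$. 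Since $H_2$ is concave on $[0,1]$ while $(1-t)/t$ is convex on $(0,1]$, the difference $H_2(t)-(1-t)/t$ is concave; as it vanishes at $t=1/2$ and $t=1$ (both checked directly), it is nonnegative on $[1/2,1]$. With the sign pattern established, Descartes' rule of signs for pseudo-polynomials (cf.\ \cite{Polya}) shows that $H$ has at most $6$ positive real zeros counted with multiplicity.

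The simple zero at $x=0$ and the zero of multiplicity at least $4$ at $x=1$ exhaust at least $5$ of the Descartes budget, so at most one further positive zero of $H$ can exist. Rolle's theorem applied to $F$ on $[0,1]$ then supplies at least one zero of $F'$ in $(0,1)$, which must be the one remaining zero and must be simple. Together with $F'(0^+)>0$, this gives $F'$ exactly one sign change --- from $+$ to $-$ --- in $(0,1)$; hence $F$ is unimodal with $F(0)=F(1)=0$, so $F\geq 0$. The principal obstacle is the interleaving of signs in the previous paragraph, which rests squarely on the entropy inequality $H_2(t)\geq (1-t)/t$; without this the Descartes count grows and the bound is no longer just tight enough to conclude.
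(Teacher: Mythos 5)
Your overall strategy is the paper's: pass to logarithms, clear denominators in $F'$ to get the same ten-term pseudo-polynomial $H$, and bound its positive roots by Descartes' rule. But there is a genuine gap in the final root count. Descartes' rule of signs (in the Pólya--Szegő form you cite) bounds the number of zeros in $(0,\infty)$ only; the zero of $H$ at $x=0$ is an artifact of the prefactor $x$ and does not consume any of the budget of $6$. So your accounting ``$1$ at $x=0$ plus $\geq 4$ at $x=1$ leaves at most $1$ more'' is wrong: the correct conclusion from what you have established is that at most $6-4=2$ further positive zeros remain. Two simple zeros of $F'$ in $(0,1)$ are exactly the dangerous configuration (then $F$ rises, falls below $0$, and climbs back to $F(1)=0$), so the argument as written does not close. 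The missing ingredient is the reflection identity $H(1/x)=x^{-(1+p+q+c)}H(x)$, which you never invoke: it pairs every root of $H$ in $(0,1)$ with one in $(1,\infty)$, so two roots in $(0,1)$ would force at least $2+2+4=8>6$ positive roots. With that symmetry your conclusion (exactly one simple zero of $F'$ in $(0,1)$) is restored, and this is precisely how the paper finishes.

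Two smaller remarks. First, your reduction of the sign pattern to the inequality $c+1\leq q$, proved via concavity of $H_2(t)-(1-t)/t$ on $[1/2,1]$, is correct but unnecessary: since the smallest exponent ($1$) and the largest ($c+p+q$) both carry positive coefficients and there are only four positive terms in total, the six negative terms fall into at most three blocks between consecutive positive terms, giving at most $6$ sign changes regardless of how the exponents interleave --- this is the (cruder but ordering-independent) count the paper uses. Your claim that ``without the entropy inequality the Descartes count grows'' is therefore not accurate. Second, your endpoint computations ($F(0)=F(1^-)=0$, $F'(1^-)=0$, $F'(0^+)>0$ outside the degenerate case $p=q=2$ where $F\equiv 0$) are all correct and match what the paper needs.
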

\begin{proof}
    Denote the left hand side of the inequality by $F(x)$. We want to prove that $\log{F(x)}\geq 0$ where $\log{F(x)}$ is given by
    \[
    \log{F(x)}=\tfrac{1}{p}\log{|1-x^p|} + \tfrac{1}{q}\log{|1-x^q|} - \log{|1-x|} - \tfrac{1}{c}\log{|1+x^c|}.
    \]
    We have 
    \begin{align*}
        x\cdot \partial_x \log{F(x)} &=
        -\frac{x^p}{1-x^p}-\frac{x^q}{1-x^q} + \frac{x}{1-x} - \frac{x^c}{1+x^c}
        \\&=
        \frac{1}{1-x}+\frac{1}{1+x^c}-\frac{1}{1-x^p}-\frac{1}{1-x^q},
    \end{align*}
    and so $H(x):=x(1-x)(1+x^c)(1-x^p)(1-x^q)\cdot \partial_x \log{F(x)}$ can be written as a pseudo-polynomial
    \[
    x-x^c+2x^{1+c}-x^p-x^q-x^{1+c+p}-x^{1+c+q}+2x^{p+q}-x^{p+q+1}+x^{p+q+c}.
    \]
    We have $H(1/x)=H(x)\cdot x^{-1-c-p-q}$ and by the rule of signs $H(x)$ has at most 6 positive roots. Indeed, it's easy to see that $c\geq 1$ by Jensen's inequality, and so the smallest exponent is $1$ and the largest is $x^{p+q+c}$. When arranged in the increasing order of exponents, they have at most one neighbouring term with a negative sign each, and the remaining two terms with a plus sign (i.e. $2x^{1+c}$ and $2x^{p+q}$) give at most 4 more changes of sign.

    Note that as $x\rightarrow 0$ we have
    \[
    F(x)=\frac{(1-x^p/p)(1-x^q/q)}{(1-x)}\cdot (1+o_x(1)) = \frac{1-o(x)}{1-x},
    \]
    and so $F(x)\geq 1$ in some neighbourhood of zero. 

    Now assume that $F(t) < 1$ for some $t\in (0,1)$. Then $H(t)$ must have a root on both $(0, t)$ and $(t, 1)$. Hence, together with a $x\mapsto 1/x$ symmetry and the triple root at zero we have at least 7 positive roots leading to a contradiction. 
\end{proof}

\subsection{The proof of Theorem~\ref{main thm 2}}
 The following lemma plays an important role in the proof of Theorem~\ref{main thm 2}, it is motivated by Theorem 1.1 in \cite{GMRSZ} and can be established with a similar strategy. 
 
\begin{lemma}\label{key ingridient in main thm 2}
Let $d\geq 1$, $k\in\N$, $A_1, A_2,\dots,A_{k} \subseteq \Z^d$ be finite sets, and suppose that $U=\{0,1,\dots,k-1\}^d$. Then $|A_1+A_2+\dots+A_k+U| \geq |U|\prod_{i=1}^{k}|A_i|^{1/k}$.
\end{lemma}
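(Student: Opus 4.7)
The plan is to imitate the strategy of \cite{GMRSZ} for the case $k=2$ and adapt it to general $k\geq 2$. The argument has two main steps: a reduction to dimension one via the dimension-compressing Theorem~\ref{mth001}, followed by a direct one-dimensional functional inequality.

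Applying Theorem~\ref{mth001} with $n=k+1$ functions, exponents $p_1=\cdots=p_k=1/k$ and $p_{k+1}=1$, $S_{k+1}=\{0,1,\dots,k-1\}$, and $S_1,\dots,S_k$ chosen large enough to contain the supports of $A_1,\dots,A_k$, reduces the inequality $|A_1+\cdots+A_k+U|\ge|U|\prod_i|A_i|^{1/k}$ in arbitrary $d$ to the following one-dimensional functional statement: for all non-negative finitely supported $g_i\colon\mathbb Z\to[0,\infty)$,
\begin{equation*}
\sum_{z\in\mathbb Z}\;\max_{\substack{x_1+\cdots+x_k+u=z\\ u\in\{0,\dots,k-1\}}}\;\prod_{i=1}^k g_i(x_i)\;\ge\;k\prod_{i=1}^k\Bigl(\sum_{x\in\mathbb Z}g_i(x)^k\Bigr)^{1/k}.
\end{equation*}
Taking $g_i=\mathbf{1}_{A_i}$ recovers the set statement with $|U|=k^d$ in dimension $d$.

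To prove the one-dimensional functional inequality, first invoke a Lemma~\ref{perestanovka}-style rearrangement to assume each $g_i$ is non-increasing on an initial segment $\{0,\dots,n_i-1\}$. Then for every threshold $t>0$ the super-level set $\{g_i>t\}=\{0,\dots,n_i(t)-1\}$ is itself an initial segment, for which the set-level inequality is immediate from iterated Cauchy--Davenport combined with AM--GM:
\[
\bigl|\{0,\dots,n_1(t){-}1\}+\cdots+\{0,\dots,n_k(t){-}1\}+\{0,\dots,k{-}1\}\bigr|\ge\sum_i n_i(t)\ge k\prod_i n_i(t)^{1/k}.
\]
One then seeks to combine these level-set bounds via a multi-threshold layer-cake: parameterize $\{\prod_i g_i(x_i)>t\}$ by tuples $(t_1,\dots,t_k)$ with $\prod_i t_i=t$, apply the set-level inequality at each such tuple, and integrate over the resulting $(k-1)$-dimensional family of thresholds via Hölder's inequality so that the product structure $\prod_i\|g_i\|_k$ on the right is matched exactly.

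The main obstacle is this multi-threshold integration. A naive single-threshold layer-cake produces the weaker bound involving $\int\min_i g_i(x_i)\,dt$ on the left, and the corresponding $\int\prod_i n_i(t)^{1/k}\,dt$ on the right is bounded \emph{above} by $\prod_i(\int n_i)^{1/k}$ via Hölder---the wrong direction. The correct resolution, generalizing the single Cauchy--Schwarz step used in \cite{GMRSZ} for $k=2$, is a genuine $k$-variable Hölder coupling between the thresholds $(t_1,\dots,t_k)$, together with an averaging/rearrangement argument showing that the maximum over tuples in the constraint $\prod_i t_i=t$ aligns with the non-increasing order. Executing this multilinear step for general $k$ is the technical heart of the proof.
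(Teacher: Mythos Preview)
Your dimension-reduction step is essentially the same as the paper's explicit induction on $d$ (slicing along the last coordinate), so no issue there in spirit. One small caveat: Theorem~\ref{mth001} as stated requires the $d=1$ inequality for \emph{all} non-negative $f_{k+1}$ supported on $\{0,\dots,k-1\}$, whereas you only treat $f_{k+1}=\mathbf{1}_{\{0,\dots,k-1\}}$. This is harmless, because the tensorization in the proof of Theorem~\ref{mth001} preserves the property that $f_{k+1}$ is a scalar multiple of the cube indicator, but you should say so explicitly.

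The genuine gap is the one-dimensional inequality itself. You correctly identify the obstacle: a single-threshold layer-cake yields $\int \prod_i n_i(t)^{1/k}\,dt$ on the right, and H\"older bounds this from \emph{above} by $\prod_i\|g_i\|_k$, which is useless. Your proposed fix---a ``$k$-variable H\"older coupling between thresholds $(t_1,\dots,t_k)$ together with an averaging/rearrangement argument''---is not a proof but a description of what a proof would have to accomplish. You do not specify the coupling measure on the simplex $\prod t_i=t$, nor explain why the resulting integral would match $k\prod_i\|g_i\|_k$ rather than some other quantity, nor verify any inequality in the direction you need. The sentence ``Executing this multilinear step for general $k$ is the technical heart of the proof'' is an acknowledgement that the heart is missing. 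Moreover, \cite{GMRSZ} does not in fact proceed by a Cauchy--Schwarz/layer-cake argument for $k=2$; it already uses discretized Pr\'ekopa--Leindler, so there is no two-function template to generalize in the way you suggest.

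The paper sidesteps the entire layer-cake difficulty. It proves the one-dimensional inequality (Proposition~\ref{prop01}) by applying the \emph{continuous} Pr\'ekopa--Leindler inequality \eqref{pl} to the functions $f_i(x)=e^{\lambda\{x\}}a_i(\lfloor x\rfloor)$, computing $\|f_i\|_k$ and bounding $\int f_1\overast\cdots\overast f_k$ in terms of the discrete sup-convolution, and then letting $\lambda\to 0^+$. This gives the needed bound $\sum_n \max_{0\le m\le k-1}\tfrac{1}{k}\,a_1\overast\cdots\overast a_k(n-m)\ge \prod_i\|a_i\|_k$ directly, with no level-set integration.
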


Notice that the inequality in Lemma \eqref{key ingridient in main thm 2} is optimal, in fact if $A_{1}=A_2=\dots=A_k=U=\{0,1,\dots,k-1\}^d$, then, both sides are equal to $k^{2d
}$. Before we verify the lemma let us complete the proof of Theorem~\ref{main thm 2}.

The upper bound in Theorem~\ref{main thm 2} can be obtained by considering the case $A=\{0,1,\dots,k-1\}^d$. We will focus our attention on the lower bound.
Let $\alpha:=\frac{|kA|}{|A|}$. By the Pl\"unnecke inequality
for different summands in commutative group \cite[Theorem 1.1]{GMR} there is a non-empty subset
$B$ of $A$ such that
$$
\alpha^{\frac{k}{k-1}}|B|\geq
|B+kA|.
$$
Then, by Lemma \ref{key ingridient in main thm 2} we have
\begin{align*}
\alpha^{\frac{k}{k-1}}|B|&\geq
|B+kA|\\
&\geq|B+(k-1)A+\{0,1,\dots,k\}^d|\\
&\geq k^d|B|^{\frac{1}{k}}|A|^{\frac{k-1}{k}}.
\end{align*}
Therefore
$$
\alpha^{\frac{k}{k-1}}|B|^{\frac{k-1}{k}}\geq k^d|A|^{\frac{k-1}{k}}\geq k^d|B|^{\frac{k-1}{k}}.
$$
Thus, we conclude that
$$
\alpha\geq k^{\frac{(k-1)d}{k}}.
$$




\subsubsection{Proof of Lemma \ref{key ingridient in main thm 2}}

The proof of this lemma follows from an adaptation of the proof of \cite[Theorem 1.2]{GMRSZ} by Green, Matolcsi, Rusza, Shakan and Zhelezov. Similarly to \cite{GMRSZ}, we obtain Lemma \ref{key ingridient in main thm 2} by discretizing the continuous  Pr\'ekopa--Leindler inequality for many functions. 


\begin{proposition}\label{prop01}
Let $k\in\N$, $a_1, a_2,\dots, a_{k}  : \Z \rightarrow [0,\infty)$ be compactly supported functions, let $\lambda>0$ and $p_j:=e^{j\lambda}\frac{e^{\lambda}-1}{e^{k\lambda}-1}$ for all $j\in\{0,1,\dots,k-1\}$. Then we have
\begin{equation}\label{key ineq} 
\sum_{n\in\Z} \max_{j\in\{0,1,\dots,k-1\}}(p_{j} a_1\overast a_2\overast\dots\overast a_{k}(n-j)) \geq \prod_{i=1}^{k}\Vert a_i \Vert_k.
\end{equation}
\end{proposition}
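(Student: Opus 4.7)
The plan is to deduce the discrete sup-convolution inequality \eqref{key ineq} from the classical multi-function Pr\'ekopa--Leindler inequality on $\mathbb{R}$ via a carefully chosen extension of the $a_i$ to the real line. The geometric progression in the coefficients $p_j$ strongly suggests that the right extension should combine the integer values of $a_i$ with an exponential factor on the fractional part: for each $i$, I would set
\[
f_i(x) := a_i(\lfloor x \rfloor)\, e^{\lambda\{x\}}, \qquad x \in \mathbb{R},
\]
where $\{x\} = x - \lfloor x \rfloor$. A direct calculation then gives
\[
\|f_i\|_{L^k(\mathbb{R})}^{k} = \sum_{n \in \mathbb{Z}} a_i(n)^k \int_0^1 e^{k\lambda t}\, dt = \frac{e^{k\lambda}-1}{k\lambda}\, \|a_i\|_{\ell^k(\mathbb{Z})}^k.
\]

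Next I would invoke the multi-function Pr\'ekopa--Leindler inequality on $\mathbb{R}$ with equal weights $1/k$, in its sup-convolution form
\[
\int_{\mathbb{R}} (f_1 \overast \cdots \overast f_k)(w)\, dw \;\geq\; k \prod_{i=1}^k \|f_i\|_{L^k(\mathbb{R})},
\]
whose right-hand side, by the norm computation above, equals $\frac{e^{k\lambda}-1}{\lambda}\prod_i \|a_i\|_k$. The core step is to evaluate the left-hand side explicitly. Writing each $x_i = n_i + s_i$ with $n_i \in \mathbb{Z}$ and $s_i \in [0,1)$, the constraint $\sum x_i = w$ with $w \in [n,n+1)$ forces $m := \sum n_i \in \{n-k+1, \dots, n\}$ and $\sum s_i = w - m$; since the exponential factor depends only on $\sum s_i$, the supremum factorizes and yields
\[
(f_1 \overast \cdots \overast f_k)(w) = e^{\lambda(w-n)} \max_{0 \leq j \leq k-1} e^{\lambda j}\, (a_1 \overast \cdots \overast a_k)(n-j).
\]

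Integrating this expression over each unit interval $[n, n+1)$ contributes the factor $\int_0^1 e^{\lambda t}\, dt = \frac{e^\lambda - 1}{\lambda}$, so the Pr\'ekopa--Leindler bound becomes
\[
\frac{e^\lambda - 1}{\lambda} \sum_{n \in \mathbb{Z}} \max_{0 \leq j \leq k-1} e^{\lambda j}\, (a_1 \overast \cdots \overast a_k)(n-j) \;\geq\; \frac{e^{k\lambda}-1}{\lambda} \prod_{i=1}^k \|a_i\|_k.
\]
Dividing through by $\frac{e^{k\lambda}-1}{e^\lambda - 1}$ collapses the ratio on the left precisely into the weights $p_j = e^{j\lambda}(e^\lambda-1)/(e^{k\lambda}-1)$, yielding \eqref{key ineq}. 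The only genuine obstacle is the choice of extension; once the weighting $e^{\lambda\{x\}}$ is in place, the geometric structure of $p_j$ emerges automatically as the ratio between the two exponential moments $\int_0^1 e^{k\lambda t}\, dt$ and $\int_0^1 e^{\lambda t}\, dt$, and the rest is essentially bookkeeping.
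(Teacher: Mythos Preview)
Your argument is correct and follows essentially the same route as the paper: the same extension $f_i(x)=a_i(\lfloor x\rfloor)e^{\lambda\{x\}}$, the same application of the $k$-function Pr\'ekopa--Leindler inequality, and the same computation of $\|f_i\|_k$. The only (harmless) difference is that you establish the exact identity $(f_1\overast\cdots\overast f_k)(n+t)=e^{\lambda t}\max_{0\le j\le k-1}e^{\lambda j}(a_1\overast\cdots\overast a_k)(n-j)$, whereas the paper records only the upper bound $\leq$, which is all that is needed.
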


\begin{corollary}
In particular, taking the limit in the left hand side of \eqref{key ineq} as $\lambda\to 0^{+}$,  we obtain that \eqref{key ineq} holds when $p_0=p_1=\dots=p_{k-1}=\frac{1}{k}$.
\end{corollary}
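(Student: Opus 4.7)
The plan is to pass to the limit $\lambda \to 0^+$ directly in Proposition~\ref{prop01}. The right-hand side $\prod_{i=1}^{k}\|a_i\|_k$ does not involve $\lambda$, so the only thing to verify is that the left-hand side converges pointwise to the expression obtained by replacing each $p_j$ with $\tfrac{1}{k}$, and that this pointwise convergence upgrades to convergence of the sum.

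First I would compute the limit of the weights: using the expansions $e^{\lambda}-1=\lambda+O(\lambda^2)$ and $e^{k\lambda}-1=k\lambda+O(\lambda^2)$ together with $e^{j\lambda}\to 1$, one immediately gets
\[
\lim_{\lambda\to 0^+} p_j \;=\; \lim_{\lambda\to 0^+} e^{j\lambda}\cdot\frac{e^\lambda-1}{e^{k\lambda}-1}\;=\;\frac{1}{k}
\]
uniformly in $j\in\{0,1,\dots,k-1\}$. Set $F:=a_1\overast a_2\overast\cdots\overast a_k$. Since every $a_i$ is compactly supported, $F$ is supported in the (finite) Minkowski sum $\mathrm{supp}(a_1)+\cdots+\mathrm{supp}(a_k)$, so only finitely many $n\in\Z$ contribute to the sum on the left-hand side of \eqref{key ineq}.

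For each such $n$, the maximum $\max_{j\in\{0,\dots,k-1\}}p_j F(n-j)$ is taken over a \emph{finite} index set, so by continuity of $\max$ of finitely many continuous functions,
\[
\lim_{\lambda\to 0^+}\max_{j}\bigl(p_j F(n-j)\bigr)\;=\;\max_{j}\Bigl(\tfrac{1}{k}F(n-j)\Bigr)\;=\;\tfrac{1}{k}\max_{j}F(n-j).
\]
Since the outer sum in $n$ has only finitely many nonzero terms, the interchange of limit and sum is justified trivially, and we obtain
\[
\sum_{n\in\Z}\tfrac{1}{k}\max_{j\in\{0,1,\dots,k-1\}}F(n-j)\;\geq\;\prod_{i=1}^{k}\|a_i\|_k,
\]
which is exactly \eqref{key ineq} with $p_0=p_1=\cdots=p_{k-1}=\tfrac{1}{k}$.

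There is essentially no obstacle here: the compact support of the $a_i$ removes any measure-theoretic subtleties about interchanging limits and sums, and the continuity of the finite max handles the pointwise limit. The only thing worth double-checking is the elementary computation $\lim_{\lambda\to 0^+}(e^\lambda-1)/(e^{k\lambda}-1)=1/k$, which is immediate from L'H\^opital's rule or Taylor expansion.
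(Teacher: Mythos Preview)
Your proof is correct and is precisely the intended argument: the paper does not give a separate proof of this corollary beyond the phrase ``taking the limit,'' and your justification (computing $\lim_{\lambda\to 0^+}p_j=1/k$, using compact support of the $a_i$ to reduce to a finite sum, and invoking continuity of a finite maximum) is exactly what that phrase means here.
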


Assume that the functions $f_i : \R \rightarrow [0, \infty)$ are compactly supported and piece-wise continuous for each $i\in\{1,2,\dots,k\}$. Then the (1-dimensional) generalized Pr\'ekopa--Leindler inequality states that
\begin{equation}\label{pl}  \int f_1 \overast f_2 \overast\dots\overast f_k \geq k \prod_{i=1}^{k}\Vert f_i \Vert_k , \end{equation} where the left hand side denotes the max-convolution, which is defined by
\[ f_1 \overast f_2\overast\dots\overast f_k(x) := \sup_{x_1+x_2+\dots+x_k; x_i \in \R} \prod_{i=1}^{k} f_i(x_i),\] and as usual
\[ \Vert f \Vert_k := \left( \int_{\R} |f(x)|^k dx\right)^{1/k}.\]

\begin{proof}[Proof of Proposition \ref{prop01}]  
We use \eqref{pl} with the functions $f_{i}$ defined by
\[ f_{i}(x) := e^{\lambda \{x \}} a_{i} (\lfloor x\rfloor),\]
for all $i\in\{1,2,\dots,k\}$.

Let $n \in \Z$ and $0 \leq t < 1$. Assume that $\sum_{i=1}^{k}x_i= n + t$. Then, since $x - 1 < \lfloor x\rfloor \leq x$, we have $n-k < \sum_{i=1}^{k}\lfloor x_{i}\rfloor < n+1$. 
If $\sum_{i=1}^{k}\lfloor x_i \rfloor=n-m$ for some $m\in\{0,1,\dots,k-1\}$, then 
\[ \prod_{i=1}^{k} f_i(x_i) \leq e^{\lambda(t + m)} a_1 \overast a_2 \overast\dots \overast a_{k} (n-m).\]
Thus
\[ f_1 \overast f_2 \overast\dots \overast f_k (n + t) \leq e^{\lambda t} \max_{m\in\{0,1,\dots,k-1\}} e^{\lambda m} a_1 \overline{\ast} a_2 \overast \dots\overast a_{k} (n-m).\]
Taking integrals from 0 to 1 with respect to $t$ and then adding over all $n \in \Z$ we obtain
\begin{equation}\label{star-1} 
\int_{\R} f_1 \overast f_2 \overast\dots \overast f_k(x) dx\leq \frac{e^{\lambda} - 1}{\lambda} \sum_{n\in \mathbb{Z}} \max_{m\in\{0,1,\dots,k-1\}} e^{\lambda m} a_1 \overline{\ast} a_2 \overast \dots\overast a_{k} (n-m).\end{equation}
On the other hand,
\[ \Vert f_i \Vert_k^k = \frac{e^{k\lambda} - 1}{k\lambda} \Vert a_i \Vert_k^k,\]
for each $i\in\{1,2,\dots,k\}$. 
Therefore, from \eqref{pl} we obtain
\[ 
\sum_{n\in\mathbb{Z}} \max_{m\in\{0,1,\dots,k-1\}} e^{\lambda m} a_1 \overline{\ast} a_2 \overast \dots\overast a_{k} (n-m) \geq \frac{e^{k\lambda} - 1}{e^{\lambda}-1}\prod_{i=1}^{k} \Vert a_i \Vert_k.\] The result then follows from the fact that $p_j:=e^{j\lambda}\frac{e^{\lambda}-1}{e^{k\lambda}-1}$. 
\end{proof}


\begin{proof}[Proof of Lemma \ref{key ingridient in main thm 2}]

We proceed by induction on $d$. The proof of the inductive step also proves the base case $d = 1$.


Recall that $U=\{0,1,\dots,k-1\}^d$. We define $\pi:U\to\{0,1,\dots,k-1\}$ to be the projection onto the last coordinate, i.e
$\pi(x_1,\dots,x_d)=x_d$ for all $(x_1,\dots,x_d)\in\{0,1,\dots,k-1\}^d$. For each $i\in\{1,2,\dots,k\}$ and $x\in\{0,1,\dots,k-1\}$, let $A_{i,x} := A_i \cap \pi^{-1}(x)$ be the fibre of $A_i$ above $x$. The set $U$ has $k$ fibres $U_0, U_1, \dots, U_{k-1}$ and 
they are all $d-1$-dimension cubes, here $U_i=\{0,1,\dots,k-1\}^{d-1}\times\{i\}$ for each $i\in\{0,1,\dots,k-1\}$. Moreover, we have that $\frac{|U_i|}{|U|}=\frac{1}{k}$ for each $i\in\{0,1,\dots,k-1\}$.

We note that the fibre of $A_1+A_2+\dots+A_k+U$ above $n$ contains the set $A_{1,x_1}+A_{2,x_2}+\dots+A_{k,x_k}+U_m$ whenever $x_1+x_2+\dots+x_k=n-m$ for $m\in\{0,1,\dots,k-1\}$. By induction,
\[ |A_{1,x_1}+A_{2,x_2}+\dots+A_{k,x_k}+U_m| \geq |U_m|\prod_{i=1}^{k}|A_{i,x_{i}}|^{1/k},\]
for all $m\in\{0,1,\dots,k-1\}$.\\

For each $i\in\{1,2,\dots,k\}$ we define $a_{i}:\Z\to[0,+\infty)$ by $a_{i}(x):=|A_{i,x}|^{1/k}$ for all $x\in\{0,1,\dots,k-1\}$ and $a_i(x)=0$ otherwise. Then summing over $n$, using the fact that $|U_m|=\frac{|U|}{k}$ for all $0\leq m\leq k-1$, and using Proposition \ref{prop01}, we obtain
\begin{align*}
& |A_{1}+A_2+\dots+A_{k}+U|\\
& = \sum_{n\in\Z} |(A_{1}+A_2+\dots+A_{k}+U)_n| \\ 
&\geq \sum_{n} 
\frac{|U|}{k}
\max_{x_1+x_2+\dots+x_{k}=n-m}\prod_{i=1}^{k}|A_{i,x_i}|^{1/k}
\\
& \geq |U| \sum_n \max_{m\in\{0,1,\dots,k-1\}} \big( \frac{1}{k} a_{1}\overast a_{2}\overast\dots \overast a_{k}(n-m)\big) \\ 
& \geq |U| \prod_{i=1}^{k}\Vert a_i \Vert_k  = |U|\prod_{i=1}^{k}|A_{i}|^{1/k}. 
\end{align*}

\end{proof}

\section{Acknowledgments}
The authors are very thankful to Ben Green, Jaume de Dios Pont, Pavlos Kalantzopoulos, Fedor Petrov, and Terence Tao for helpful discussions. L.B. was funded by the Deutsche Forschungsgemeinschaft (DFG, German Research Foundation) under Germany's Excellence Strategy -- EXC-2047/1 -- 390685813 as well as SFB 1060. P.I. acknowledges support from NSF grant CAREER-DMS-2152401.

\end{document}